\newtheorem{thm}{Theorem}[section]
\newtheorem{cor}[thm]{Corollary}
\newtheorem{lem}[thm]{Lemma}
\newtheorem{prop}[thm]{Proposition}
\theoremstyle{definition}
\newtheorem{rem}[thm]{Remark}
\newtheorem*{rem*}{Remark}
\numberwithin{equation}{section}
\definecolor{OrangeRed}{cmyk}{0,0.6,1,0}            
\definecolor{DarkBlue}{cmyk}{1,1,0,0.20}
\definecolor{DarkGreen}{cmyk}{1,0,0.6,0.2}
\definecolor{myblue}{rgb}{0.66,0.78,1.00}
\definecolor{Violet}{cmyk}{0.79,0.88,0,0}
\definecolor{Lavender}{cmyk}{0,0.48,0,0}
\renewcommand{\Im}{\operatorname{Im}}
\renewcommand{\Re}{\operatorname{Re}}
\newcommand{\dist}{\operatorname{dist}}
\newcommand{\MM}{{\cal M}}
\newcommand{\NN}{{\cal N}}
\newcommand{\PP}{{\cal P}}
\newcommand{\XX}{{\cal X}}
\newcommand{\ZZ}{{\cal Z}}
\newcommand{\C}{{\mathbb C}}
\newcommand{\D}{{\mathbb D}}
\renewcommand{\H}{{\mathbb H}}
\newcommand{\N}{{\mathbb N}}
\renewcommand{\P}{{\mathbb P}}
\newcommand{\R}{{\mathbb R}}
\newcommand{\ra}{\rightarrow}
\newcommand{\ov}{\overline}
\renewcommand{\epsilon}{\varepsilon}
\renewcommand{\phi}{\varphi}
\begin{document}

\title{Invariant escaping Fatou components with two rank 1 limit functions for automorphisms of $\C^2$}

\author{Anna Miriam Benini, Alberto Saracco, Michela Zedda\thanks{This  project has been partially supported by:  The project 'Transcendental Dynamics 1.5' inside the program FIL-Quota Incentivante of the  University of Parma and co-sponsored by Fondazione Cariparma; Indam through the research groups GNAMPA and GNSAGA; PRIN 2017 'Real and Complex Manifolds: Topology, Geometry and holomorphic dynamics'.}}
\date{\today}
\maketitle
\begin{abstract}We construct automorphisms of $\C^2$, and more precisely  transcendental H\'enon maps, with an invariant escaping  Fatou component which has exactly two distinct limit functions, both of (generic) rank 1.   We also prove a general growth lemma for the norm of points  in orbits belonging to invariant escaping Fatou components for   automorphisms of the form $F(z,w)=(g(z,w),z)$ with $g(z,w):\C^2\ra\C$ holomorphic.
\end{abstract}
\section{Introduction} 

We consider the dynamical system generated by the iteration of a holomorphic automorphism $F:\C^2\ra\C^2$. A \emph{Fatou component} is a maximal connected open set $U$ on which the family of iterates $\{F^n\}$ is normal, that is, every sequence has a  subsequence which converges uniformly on compact sets to a holomorphic function $g:U\ra\P^2$, where $g$ may depend on the subsequence itself (see \cite{henon1} for a discussion about the definition of normality). Such a function $g$ is called a \emph{limit function}, and  its image $g(U)$ is called  a \emph{limit set}. 
If a limit set intersects the line at infinity, then it is in fact contained in it (see Lemma 2.4 and 4.3 in \cite{henon1}).

 It is natural to  classify invariant Fatou components both from the point of view of a dynamical characterization (that is, to which model map the iterates  are conjugate to) and from the point of view of a  geometric characterization (that is, to which model manifold the Fatou component is biholomorphic). The first characterization strongly influences the latter, for example, for polynomial automorphsims of $\C^2$,  any  invariant Fatou component on which the iterates converge to a fixed point is biholomorphic to $\C^2$ \cite{UedaLocalStructure, PVW,RosayRudin}. The dynamical characterization is also very related to which types of limit functions there can be in the Fatou component,  for example, their rank, and whether the limit sets  are in the boundary of the Fatou component or in its interior.

In this paper we consider invariant escaping Fatou components. A Fatou component $U$ is called \emph{escaping} if for any 
 of its limit functions  $g$ we have $g(U)\subset \ell^\infty$, where $\ell^\infty$ is the line at infinity in the projective space $\P^2$ used to compactify $\C^2$. 

In the past three decades, the  investigation of the dynamics of holomorphic maps from $\C^2$ to $\C^2$ has concentrated on studying polynomial automorphisms, and in particular (polynomial) H\'enon maps, that is   automorphisms with constant Jacobian of the form $$F(z,w)=(P(z)+\delta w, z)$$
with $P:\C\ra\C$ polynomial of degree $d\geq2$. Indeed, by results of Friedland and Milnor \cite{FM89}, any polynomial automorphism is conjugate to either an elementary map or a finite composition of polynomial H\'enon maps, so studying the latter, gives a relatively complete picture of the dynamics of polynomial automorphisms of $\C^2$.  
For polynomial H\'enon maps it is not difficult to see \cite{BS91} that unbounded forward orbits belong to the Fatou set and   converge to the point $[1:0:0]\in \ell^\infty$. So in this case, there is always exactly one escaping Fatou component, which  can be seen  as the attracting basin of  $[1:0:0]$, and whose structure has been studied for example in \cite{HOV1,BedfordSmillieRays,Mummert}. 
So for polynomial automorphism, the matter of existence and properties of escaping Fatou components is essentially  settled.

On the other hand, one dimensional transcendental dynamics shows that periodic Fatou components on which the iterates tend to infinity (called \emph{Baker domains} in this setting)  are as of today an active research topic  (see for example the most recent papers \cite{BFJK1,BFJK2,MartiPeteEscaping,BZ12,RempeSingularBaker}). One may be tempted to  think of Baker domains as parabolic basins whose  parabolic fixed  point has been moved to infinity, but in fact,  there can be  different dynamical behaviours (\cite{Cowen,FagHenDeformationBaker}), only some of which   relate to parabolic dynamics.
   On the other hand, from the geometric point of view,  all Baker domains for entire functions are simply connected, and hence, because of the Riemann Uniformization theorem, biholomorphic to the unit disk $\D$.  Inspired by the one-dimensional examples, a transcendental H\'enon map featuring an  escaping Fatou component with a constant limit function and  which is not an attracting basin has been constructed in \cite{henon1}, Section 5. 

Our first preliminary result is that orbits in escaping Fatou components cannot grow too fast  under appropriate conditions.   This is in analogy with results obtained by  Baker 
\cite[Theorem 1]{BakerInfiniteLimits} for  Baker domain in one variable, and in contrast to the escaping points constructed in \cite{henon2}, whose orbits converge to infinity faster than any polynomial.  The proof uses methods similar to  \cite{henon1}, Lemma 5.9. 

\begin{prop}[Slow growth in escaping components]\label{prop:slow growth intro}
Let $F$ be an automorphism of the form $F(z,w)=(g(z,w),z)$ with an  escaping Fatou component   $U$ on which the iterates converge  to a function $h: U\ra\ell^\infty$ uniformly on compact subsets. 

Let   $K$ be  a compact subset of $U$, such that $h$  does not take the values $[0:1:0], [1:0:0]$ on $K$, and fix  $0<\epsilon<\min_K |h|$.   Then there exists $C=C(K)$  such that  for $n$ large enough  and for  any $P\in K$ we have
\begin{equation}
\frac{(\min_K|h|-\epsilon)^n}{C}\leq \|F^n(P)\|\leq C(\max_{K}|h|+\epsilon)^n.
\end{equation}
\end{prop}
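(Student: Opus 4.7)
The plan is to exploit the fact that $F(z,w)=(g(z,w),z)$ forces $F^n(P) = (z_n(P),z_{n-1}(P))$ for a single scalar sequence $\{z_n\}$, so the orbit is encoded by one variable and its location near $\ell^\infty$ is recorded by the ratio $z_n/z_{n-1}$. I would then reduce the proposition to a one-dimensional telescoping estimate on this ratio, in the spirit of \cite{henon1}, Lemma 5.9.

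First, I would work in the affine chart $(u,v) := (z/w,\,1/w) = (Z/W,\,T/W)$ of $\P^2$, in which $\ell^\infty$ becomes the line $\{v=0\}$, the excluded points $[0:1:0]$ and $[1:0:0]$ correspond respectively to the origin of the chart and to its line at infinity, and $|h(P)|$ is the modulus of the $u$-coordinate of $h(P)$. Since $h(K)$ avoids both forbidden points and $K$ is compact, $h(K)$ sits in a compact subset of this chart, and $|h|$ attains a positive minimum and a finite maximum on $K$. For $n$ large, $F^n(K)$ lies in the same chart (by uniform convergence in the Fubini--Study metric on $\P^2$), and the convergence translates into
$$\frac{z_n(P)}{z_{n-1}(P)}\longrightarrow h(P), \qquad \frac{1}{z_{n-1}(P)}\longrightarrow 0$$
uniformly on $K$; the ratios are well defined because $|z_{n-1}(P)|\to\infty$ uniformly.

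The core is then a telescoping step. For $\epsilon$ as in the statement, uniform convergence yields $N=N(\epsilon)$ with
$$\min_K|h|-\epsilon \;\leq\; \left|\frac{z_k(P)}{z_{k-1}(P)}\right| \;\leq\; \max_K|h|+\epsilon \qquad \forall\,k\geq N,\ P\in K.$$
Multiplying from $k=N$ to $k=n$ gives
$$|z_{N-1}(P)|\,(\min_K|h|-\epsilon)^{n-N+1}\;\leq\;|z_n(P)|\;\leq\;|z_{N-1}(P)|\,(\max_K|h|+\epsilon)^{n-N+1}.$$
The factor $|z_{N-1}|$ is continuous on the compact set $K$ and, for $N$ large, bounded below by a positive constant by the escaping property, so it is squeezed between two positive constants depending only on $K$. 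Absorbing these together with the fixed shift $(\cdot)^{N-1}$ into a single $C = C(K)$ yields the desired bounds for $|z_n(P)|$. Finally, since $|z_n/z_{n-1}|\to|h|$ uniformly on $K$ with $|h|$ bounded away from $0$ and $\infty$, one has $\|F^n(P)\|=\|(z_n,z_{n-1})\|\asymp|z_n(P)|$ uniformly on $K$ for $n$ large, so the bounds transfer to $\|F^n(P)\|$.

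The main (and really the only) technical point will be the chart setup: verifying that $F^n(K)$ eventually enters and remains in the chart $\{W\neq 0\}$ and that uniform convergence in the Fubini--Study metric on $\P^2$ transfers to uniform convergence of the pair $(z_n/z_{n-1},\,1/z_{n-1})$ on $K$. After that step is in place, the rest is a routine telescoping bound.
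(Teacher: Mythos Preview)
Your proposal is correct and follows essentially the same route as the paper: write $F^n(P)=(z_n,z_{n-1})$, use uniform convergence of $z_n/z_{n-1}$ to $h$ on $K$ to sandwich the ratio between $\min_K|h|-\epsilon$ and $\max_K|h|+\epsilon$, telescope, and pass from $|z_n|$ to $\|F^n(P)\|$. The paper is terser and simply asserts the uniform estimate on $|z_n/z_{n-1}-h(P)|$ without discussing the chart setup you describe, but your added care there is justified and does not constitute a different approach.
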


 The  main result of this  paper  is the  construction of  examples of transcendental H\'enon maps with an escaping Fatou component which has exactly 2 limit functions, both  of (generic) rank 1.
Transcendental H\'enon maps are  automorphisms with constant Jacobian of the form $$F(z,w)=(f(z)+\delta w, z)$$
with $f\!:\C\ra\C$ entire transcendental. They have been introduced in \cite{Dujardin04} to construct automorphisms with infinite entropy, and have beeen studied in    \cite{henon1, henon2,henon3}. Transcendental H\'enon maps   always have both escaping and periodic points (hence non-empty Julia set), infinite entropy, a pseudoconvex Fatou set, and can exhibit a variety of dynamical behaviour ranging from having various types of wandering domains to the possibility that the Julia set is all of $\C^2$.
 
\begin{thm}[Escaping components with distinct rank 1 limit functions]\label{thm:main theorem intro}
Let $f\!:\mathbb C\rightarrow \mathbb C$ be a transcendental entire function which  is bounded in a right half plane, and $a>1$. 
Let $F\!:\mathbb C^2\rightarrow \mathbb C^2$ be the transcendental H\'enon map  defined by 
$$
F(z,w)=(f(z)+aw,z).
$$ 
Then \begin{enumerate}
\item $F$ has an invariant escaping Fatou component $U$  with exactly two distinct  limit functions $h_1, h_2: U\ra \ell^\infty$, both of which have  (generic)  rank 1.
\item $h_1(U), h_2(U)\supset \ell^\infty\setminus \{[1:0:0],[0:1:0]\}$.
\item $F$ is conjugate to the linear map $L(z,w)=(aw,z)$ on an appropriate subset of $U$.
\item If $f(z)=e^{-z}$, then $F$ is conjugate to $L$ on all of $U$, and $U$ is biholomorphic to $\H\times\H$. 
\end{enumerate} 
\end{thm}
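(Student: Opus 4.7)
\emph{Invariant region.} The plan is to exhibit a forward-invariant region where the nonlinearity $f(z)$ acts as a bounded perturbation of the linear map $L(z,w)=(aw,z)$, then build the two limit functions and the conjugacy by renormalization along even and odd iterates. Let $M:=\sup\{|f(z)|:\Re z>R_0\}<\infty$, choose $R>\max(R_0, M/(a-1))$, and set $\Omega:=\{(z,w):\Re z>R,\ \Re w>R\}$. Since $\Re(f(z)+aw)\ge -M+aR>R$, $\Omega$ is forward invariant under $F$. Writing $F^n(z,w)=(z_n,z_{n-1})$ with $z_{-1}:=w$, the recurrence $z_{n+1}=f(z_n)+az_{n-1}$ gives $|z_{n+1}-az_{n-1}|\le M$, so the renormalized sequence $\xi_n:=z_n/a^{n/2}$ satisfies $|\xi_{n+1}-\xi_{n-1}|\le Ma^{-(n+1)/2}$. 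Summability yields holomorphic limits $\phi,\psi:\Omega\to\C$ of $\xi_{2k}$ and $\xi_{2k+1}$, with $|\phi-z|$ and $|\psi-\sqrt{a}\,w|$ of order $M/(a-1)$.

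\emph{Limit functions and items (1), (2).} In projective coordinates $[z_n:z_{n-1}:1]=[\xi_n:\xi_{n-1}/\sqrt{a}:a^{-n/2}]$, so even iterates converge to $h_1:=[\phi:\psi/\sqrt{a}:0]$ and odd iterates to $h_2:=[\psi:\phi/\sqrt{a}:0]$, both into $\ell^\infty\cong\P^1$. Since $\phi,\psi$ are non-constant (they approximate $z$ and $\sqrt{a}\,w$), both $h_i$ have generic rank $1$ and are manifestly distinct. Let $U$ be the Fatou component containing $\Omega$; any convergent subsequence of $\{F^n\}$ on $\Omega$ splits into even and odd sub-subsequences each converging to $h_1$ or $h_2$, so $U$ has exactly these two limit functions. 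On $\Omega$ the map $(z,w)\mapsto[z:w:0]$ already covers $\ell^\infty\setminus\{[1:0:0],[0:1:0]\}$ (any ratio $u\in\C^*$ is realized by choosing $w$ and setting $z:=uw$, adjusting imaginary parts so that $\Re z,\Re w>R$), and the open mapping theorem together with the perturbation estimates transfers this surjectivity to $h_1,h_2$.

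\emph{Conjugacy (item (3)).} Define $\Phi:=\lim_{n\to\infty}L^{-n}\circ F^n$; using $L^{-2}(u,v)=(u/a,v/a)$ one checks $L^{-2k}F^{2k}(z,w)\to(\phi(z,w),\psi(z,w)/\sqrt{a})$, and the odd subsequence yields the same limit, so $\Phi$ is well-defined, holomorphic on $\Omega$, and satisfies $\Phi\circ F=L\circ\Phi$. On sufficiently deep $\Omega_R$, $\Phi$ is an arbitrarily small perturbation of $(z,w)\mapsto(z,w)$, hence injective there; pulling back via $F$ extends $\Phi$ to $U_0:=\bigcup_{n\ge 0}F^{-n}(\Omega)\subseteq U$ as a conjugacy of $F|_{U_0}$ to $L|_{\Phi(U_0)}$.

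\emph{The case $f(z)=e^{-z}$ (item (4)).} Here $|f(z)|\le e^{-\Re z}$, so on $\{\Re z>R\}$ we may take $M=e^{-R}\to 0$ as $R\to\infty$, and the corrections in $\phi,\psi$ decay like $e^{-R}$. Since $L^{-2n}(\Omega_R)=\Omega_{R/a^n}$ whose union over $n$ is $\H\times\H$, and $\Phi$ is asymptotically the identity on deep $\Omega_R$, one obtains $\Phi(U_0)=\bigcup_n L^{-n}(\Phi(\Omega))=\H\times\H$. The main obstacle is then to show $U=U_0$, i.e., that every orbit in $U$ eventually enters $\Omega$. I expect to combine Proposition \ref{prop:slow growth intro} (applied on compacta of $U$) with the specific exponential decay of $f$ to rule out orbits that stay forever in the slab $\{\Re z\le R\}$; once $\Re z_n$ becomes sufficiently large the recurrence traps the orbit in $\Omega$, and the conjugacy of the previous step yields the biholomorphism $\Phi:U\to\H\times\H$.
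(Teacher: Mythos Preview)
Your treatment of items (1)--(3) follows essentially the paper's approach: the same forward-invariant bidisc $\Omega=W_R$, the same renormalization $L^{-n}\circ F^n$ (your $\xi_n=z_n/a^{n/2}$ is a repackaging of the paper's explicit recursive formulas for $F^{2n}$ and $F^{2n+1}$), and the same comparison of $h_1$ with $(z,w)\mapsto z/w$. Two repairs are needed, however. For item (3), your injectivity argument (``$\Phi$ is an arbitrarily small perturbation of the identity on deep $\Omega_R$'') only works when $|f(z)|\to 0$ as $\Re z\to\infty$; for general bounded $f$ the correction is of fixed size $O(M/(a-1))$ and need not be small. The paper instead invokes Hurwitz: each $\phi_n=L^{-n}\circ F^n$ is an automorphism, and the limit has generic rank $2$, hence is injective. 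For item (2), ``open mapping plus perturbation estimates'' is not an argument for the same reason---the perturbation does not shrink---and the paper carries out a genuine Rouch\'e comparison on carefully chosen one-dimensional disks $D_{c,\delta}(w_0)\subset W$ orthogonal to the line $\{z=cw\}$, on which $z/w$ varies by more than $|h_1-z/w|$.

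The real gap is item (4). You correctly identify that the obstacle is showing $U=U_0$, i.e., that $\Omega$ is absorbing, but ``I expect to combine\dots'' does not contain the key idea, and ``once $\Re z_n$ becomes sufficiently large the recurrence traps the orbit in $\Omega$'' is circular---that is the conclusion to be proved. The paper's mechanism is potential-theoretic. Set $u_n(z,w):=-\Re z_n/n$, harmonic on $U$. The slow-growth estimate $|z_n|\le C(M+\epsilon)^n$ together with $|e^{-z_n}|=e^{-\Re z_n}$ forces a uniform upper bound $u_n\le\log M$ on any ball $B\subset U$ where $h_1\ne 0,\infty$ (if $-\Re z_{n_k}>n_k\beta$ then $|z_{n_k+1}|\gtrsim e^{n_k\beta}$, contradicting slow growth unless $\beta\le\log M$). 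On $A:=\bigcup_n F^{-n}(\Omega)$ one has $\Re z_n\gtrsim a^{n/2}$, so $u_n\to-\infty$ locally uniformly there; conversely, if $P\in U\setminus A$ then $u_{n_k}(P)>-\epsilon$ along a subsequence (else $\Re z_n\ge n\epsilon$ eventually and $P\in A$). Choosing $P\in\partial A\cap U$ with $h_1(P)\ne 0,\infty$ (this requires an argument about analytic sets) and a disk $D\subset B$ through $P$ whose boundary meets $A$ in positive measure, the mean-value property gives $-\epsilon<u_{n_k}(P)\le -\mathcal{M}\,\mu_{\mathrm{good}}+\log M\cdot\mu_{\mathrm{bad}}$ with $\mathcal{M}\to\infty$, a contradiction. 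None of this structure appears in your sketch.
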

  Several ideas in the proof are taken from \cite{henon1}, Section 5, modified to apply to this  different setting. The escaping component constructed in \cite{henon1} differs from ours both from the dynamical and from the geometric point of view: indeed, it is  biholomorphic to $\H\times\C$ and  the map is conjugate to the linear map $G(z,w)=(2z-w,w)$.
 
 In general, it is very unclear under which conditions and for which types of automorphisms it is  possible to have   invariant Fatou components  with    limit sets of dimension 1   in the boundary.  While   \cite{LP14} gives  conditions under which this cannot happen for polynomial H\'enon maps,  there are a few examples of automorphisms sporting a Fatou component with a rank one   limit manifold in the boundary: see   \cite{JL04,  BTBP, ReppCylinders}. All examples are of non-escaping Fatou components, for   automorphisms with non-constant Jacobian, and wherever this has been computed the map in question is conjugate to the linear map $G(z,w)=(z+1, w)$, so their dynamics can be considered parabolic. On the other hand, their  complex structures are different: the Fatou components in \cite{BTBP} are biholomorphic to $\C^2$ while the ones in \cite{ReppCylinders}  are biholomorphic to $\C^*\times\C$ (compare with the construction in \cite{BRS17}).  
 
 Let us conclude by  remarking that from both the dynamical and the geometric point of view, the richness of possibilities in $2D$ as compared to $1D$ is striking. In one variable, all periodic and preperiodic Fatou components for entire and meromorphic functions are fully classified: on each such component, including Baker domains,  the dynamics is semi-conjugate to an appropriate linear map, and in the entire case, all periodic components are simply connected hence biholomorphic to the unit disk.  
 
In several variables,   recurrent Fatou components for polynomial automorphisms have been classified in \cite{BS91} (see also \cite{henon1}), but it is currently unknown whether such components can be biholomorphic to an   annulus  times $\C$ (for convincing evidence that this may indeed happen see \cite{Bed18}). Several additional geometric   possibilities are open in the transcendental H\'enon case: a priori, the rotation surface may be bioholomorphic also to the punctured disk, the punctured plane, or even the plane itself.  Non-recurrent Fatou components have been classified in \cite{LP14} for polynomial automorphisms under the  assumption that the Jacobian is small, however, removing this assumption it is not known what other dynamical behaviours may appear and what would be the geometry of the limit sets and of the Fatou component. 

\subsection*{Acknowledgements} The first author would like to thank Eric Bedford, Filippo Bracci, and Josias Reppekus for interesting discussions about nonrecurrent and escaping Fatou components for automorphisms of $\C^2$ at the Universit\`a di Roma Tor Vergata.
\subsection*{Notation} We denote by $\C$ the complex plane, by $\hat{\C}=\C\cup \{\infty\}$ its one-point compactification (the Riemann sphere), by $\H$ the right half plane $\{\Re z>0\}$, and by $\D$ the Euclidean unit disk.  The complex projective space is denoted by $\P^2$ and the line at infinity by $\ell^\infty$.
 
The complex line $\ell^\infty$ is biholomorphic to the Riemann sphere $\hat{\C}$, via the biholomorphism $\phi$ which sends $[p:q:0]$ to $\frac{p}{q}$.  Given a holomorphic map $h$ from a domain of $\C^2$ to $\ell^\infty$ we identify it with a holomorphic map $h$ to the Riemann sphere, or  equivalently,  with  a meromorphic map to $\C$. 


\section{Slow growth in escaping components}\label{sect:slow growth} 
Using hyperbolic geometry, Baker 
\cite[Theorem 1]{BakerInfiniteLimits} 
 proved that,  if $z$ is in a Baker domain for an entire transcendental function $f:\C\ra\C$, then
$\log |f^n(z)|= O(n)$ as $n\ra\infty$.
 We show an analogous result for periodic escaping components for transcendental H\'enon maps, establishing Proposition~\ref{prop:slow growth intro}.  This is in contrast to the escaping points constructed in \cite{henon2}, whose orbits converge to infinity  faster than any polynomial.  This result, which we believe to be of independent interest, is used  in Section~\ref{sect:absorbing}.
 
  We restate Proposition~\ref{prop:slow growth intro} for convenience.
   The proof uses   methods similar to  \cite{henon1}, Lemma 5.9.

\begin{prop}[Slow growth in  escaping components]\label{prop:slow growth basic}
Let $F$ be an automorphism of the form $F(z,w)=(g(z,w),z)$ with an  escaping Fatou component   $U$ on which the iterates converge  to a function $h: U\ra\ell^\infty$ uniformly on compact sets. 

Let   $K$ be  a compact subset of $U$, such that $h$  does not take the values $0, \infty$ on $K$, and fix  $0<\epsilon<\min_K |h|$.   Then there exists $C=C(K)$  such that  for $n$ large enough  and for  any $P\in K$ we have
\begin{equation}\label{eqtn:slow growth in Baker}
\frac{\left(\min_K|h|-\epsilon\right)^n}{C}\leq \|F^n(P)\|\leq C\left(\max_{K}|h|+\epsilon\right)^n.
\end{equation}
\end{prop}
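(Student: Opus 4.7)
The plan is to exploit the triangular shape $F(z,w) = (g(z,w), z)$: for any $P \in U$ the orbit has the form $F^n(P) = (z_n, z_{n-1})$ for some scalar sequence $\{z_n\}$, so $\|F^n(P)\|$ is comparable to $\max(|z_n|, |z_{n-1}|)$ and the whole question reduces to two-sided exponential bounds on the scalar sequence $|z_n|$.

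The first key step is to convert the projective convergence $F^n \to h$ into scalar convergence of ratios. In the chart $[p:q:0] \mapsto p/q$ on $\ell^\infty$, the hypothesis that $h$ avoids $0$ and $\infty$ on $K$ means $h(K)$ is compactly contained in $\C^*$. Writing
\begin{equation*}
F^n(P) = [z_n : z_{n-1} : 1] = \left[\frac{z_n}{z_{n-1}} : 1 : \frac{1}{z_{n-1}}\right],
\end{equation*}
I expect uniform convergence in $\P^2$ to a point $[p:q:0]$ with $p,q\neq 0$ to force both $|z_{n-1}| \to \infty$ and $z_n/z_{n-1} \to h(P)$ uniformly on $K$. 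From this I extract $n_0 = n_0(K, \epsilon)$ such that for every $n \geq n_0$ and every $P \in K$,
\begin{equation*}
\min_K |h| - \epsilon \;\leq\; \left|\frac{z_n}{z_{n-1}}\right| \;\leq\; \max_K |h| + \epsilon.
\end{equation*}

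The second step is a standard telescoping: for $n \geq n_0$,
\begin{equation*}
|z_n| \;=\; |z_{n_0}| \prod_{k=n_0+1}^{n} \left| \frac{z_k}{z_{k-1}} \right|,
\end{equation*}
so $|z_n|$ is sandwiched between $|z_{n_0}|(\min_K|h|-\epsilon)^{n-n_0}$ and $|z_{n_0}|(\max_K|h|+\epsilon)^{n-n_0}$. Since $F^{n_0}$ is continuous and $K$ is compact, $|z_{n_0}|$ admits a uniform upper bound, and (after possibly enlarging $n_0$, using that $|z_{n-1}|\to\infty$ uniformly on $K$) also a uniform positive lower bound. All $n_0$-dependent factors get absorbed into a single constant $C=C(K)$, yielding the scalar form of \eqref{eqtn:slow growth in Baker}. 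The conversion to a norm estimate is then immediate: $\|F^n(P)\| \geq |z_n|$ delivers the lower bound, while $\|F^n(P)\| \leq |z_n| + |z_{n-1}| \leq |z_n|\bigl(1 + (\min_K|h|-\epsilon)^{-1}\bigr)$ delivers the upper bound after enlarging $C$ once more.

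The only delicate point, as in Baker's one-variable argument, is the uniformity of $n_0$ with respect to $P \in K$. This is where compactness of $K$, the uniform convergence of $\{F^n\}$, and the continuity of $h$ combine: they guarantee that the values $h(P)$ stay inside a fixed compact annulus $\{w \in \C^* : \min_K|h| \leq |w| \leq \max_K|h|\}$, so that a single $n_0$ works simultaneously for every $P \in K$. Everything else is bookkeeping.
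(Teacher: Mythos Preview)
Your proof is correct and follows essentially the same route as the paper: reduce to the scalar sequence $(z_n)$ via the triangular form of $F$, use uniform convergence of $z_n/z_{n-1}$ to $h$ on $K$ to sandwich the ratio between $\min_K|h|-\epsilon$ and $\max_K|h|+\epsilon$, telescope to get the exponential bounds on $|z_n|$, and absorb the initial segment into $C$. Your writeup is in fact slightly more explicit than the paper's about the uniform lower bound on $|z_{n_0}|$ and the passage from $|z_n|$ to $\|F^n(P)\|$.
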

\begin{proof}
Let $P_n=(z_n,w_n)=(z_n,z_{n-1})$ by the special form of $F$. Since $F^n(P)\ra\ell^\infty$ and $h$ does not take the values $0, \infty$ we can assume that $|z_n|, |w_n|\neq 0$ for all $n$ large enough. 
   Since $F^n\ra h$ uniformly on $K$,  there exists $n_\epsilon$ such   that for all $n\geq n_\epsilon$   and all $P\in K$ we have
$$
\left|\frac{z_n}{w_n}-h(P)\right|=\left|\frac{z_n}{z_{n-1}}-h(P)\right|<\epsilon,
$$ 
hence, using the triangular inequality,
$$
\min_K|h|-\epsilon \leq \left|\frac{z_n}{z_{n-1}}\right|\leq \max_K |h|+\epsilon,
$$
from which it follows (adding the multiplicative factor $c^{\pm1}$ to account for $n\leq n_\epsilon$)
  $$
  \frac{\left(\min_K|h|-\epsilon\right)^n}{c}\leq |z_n|\leq c\left(\max_{K}|h|+\epsilon\right)^n.
  $$
  Since $w_n=z_{n-1}$, the analogous inequality holds for $|w_n|=|z_{n-1}|$ and the claim for $\|P_n\|$ follows for some constant $C$.
\end{proof}
 
The proof is easily generalized to obtain the following: 

\begin{prop}[Slow growth general version]\label{prop:slow growth general}
Let $F$ be as in Proposition~\ref{prop:slow growth basic} with an escaping Fatou component $U$ with finitely many limit functions $h_i:U\ra\ell^\infty.$ Suppose that there exists a partition of $\N$ into finitely many subsequences $\NN_i$ such that for each $i$ the iterates of $F$ converge to the limit function $h_i$ along the subsequence $\NN_i$. 
Let $K$ be a compact subset of $U$ such that none of the $h_i$ attains the value $0, \infty$ on $K$, and let $\epsilon<\min_{K,i}|h_i|$. Then  for any $P\in K$  and $n$ large enough we have 
\begin{equation}\label{eqtn:slow growth in Baker}
\frac{\left(\min_{K,i}|h_i|-\epsilon\right)^n}{C}\leq \|F^n(P)\|\leq C\left(\max_{K,i}|h_i|+\epsilon\right)^n.
\end{equation}
 \end{prop}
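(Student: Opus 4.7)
The plan is to follow the proof of Proposition~\ref{prop:slow growth basic} verbatim at the level of a single integer $n$, and then exploit the finiteness of the index set $\{i\}$ to upgrade the local estimate into a uniform bound valid for every sufficiently large $n$. The shift structure $F(z,w)=(g(z,w),z)$ still yields $F^n(P)=(z_n,z_{n-1})$, so the whole argument reduces, as before, to bounding the ratio $|z_n/z_{n-1}|$ for every $n$ past some threshold, rather than merely along a fixed subsequence.

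Concretely, for each $i$, uniform convergence $F^n\to h_i$ on $K$ along $\NN_i$ furnishes an integer $n_{\epsilon,i}$ such that
$$
\left|\frac{z_n}{z_{n-1}}-h_i(P)\right|<\epsilon
$$
for every $n\in\NN_i$ with $n\geq n_{\epsilon,i}$ and every $P\in K$ (identifying $\ell^\infty$ with $\hat{\C}$ via $\phi$). Because there are only finitely many indices $i$, the quantity $n_\epsilon:=\max_i n_{\epsilon,i}$ is finite. The key structural observation is that the $\NN_i$ \emph{partition} $\N$: every integer $n\geq n_\epsilon$ lies in exactly one $\NN_i$ and automatically satisfies $n\geq n_{\epsilon,i}$, so the estimate above applies without gaps. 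The triangle inequality then yields the uniform bound
$$
\min_{K,i}|h_i|-\epsilon\;\leq\;\left|\frac{z_n}{z_{n-1}}\right|\;\leq\;\max_{K,i}|h_i|+\epsilon
$$
for every $n\geq n_\epsilon$ and every $P\in K$.

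From here, telescoping
$$
|z_n|=|z_{n_\epsilon}|\prod_{k=n_\epsilon+1}^{n}\left|\frac{z_k}{z_{k-1}}\right|
$$
produces the desired exponential bounds on $|z_n|$, after absorbing $|z_{n_\epsilon}|$ (bounded above and away from $0$ on the compact set $K$ since $F^{n_\epsilon}$ is continuous and $h_i(K)$ avoids $0$ and $\infty$) together with the multiplicative factors $(\min_{K,i}|h_i|-\epsilon)^{-n_\epsilon}$ and $(\max_{K,i}|h_i|+\epsilon)^{-n_\epsilon}$ into a single constant $C=C(K)$. Since $w_n=z_{n-1}$, the same bounds transfer to $|w_n|$, and hence to $\|F^n(P)\|$ after a harmless enlargement of $C$.

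I do not expect a real obstacle; the proof is essentially a routine modification of the basic case. The only points that deserve a moment of care are the two places where finiteness of the index set $\{i\}$ is used --- to ensure $n_\epsilon$ is finite and to make $\min_{K,i}|h_i|$ and $\max_{K,i}|h_i|$ meaningful --- and the use of the \emph{partition} hypothesis (rather than a mere covering) to make sure each $n\geq n_\epsilon$ lands in some $\NN_i$ past its own threshold. Without either of these hypotheses, the telescoping step would no longer yield a uniform geometric bound.
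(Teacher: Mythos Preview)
Your proposal is correct and matches the paper's approach: the paper does not give a separate argument but simply notes that the proof of Proposition~\ref{prop:slow growth basic} ``is easily generalized,'' and you have spelled out precisely that generalization, using the finiteness of the index set to obtain a uniform threshold $n_\epsilon$ and the partition hypothesis to ensure every $n\geq n_\epsilon$ falls under some $\NN_i$ before telescoping. The only cosmetic remark is that the justification for $|z_{n_\epsilon}|$ being bounded away from $0$ on $K$ is really that $|z_n|\to\infty$ uniformly on $K$ (since $F^n\to\ell^\infty$ with limit avoiding $[0:1:0]$ and $[1:0:0]$), not merely continuity of $F^{n_\epsilon}$; this is exactly what the paper sweeps into its phrase ``adding the multiplicative factor $c^{\pm1}$ to account for $n\leq n_\epsilon$.''
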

  
We note the following corollary that we will use in Section~\ref{sect:absorbing}, and follows from the proof of Proposition~\ref{prop:slow growth basic}  and Proposition~\ref{prop:slow growth general}.
\begin{cor}\label{cor:slow growth 2}Let $U$ be an escaping Fatou component for $F$ as in Proposition \ref{prop:slow growth basic}, such that $F^{2n}\ra h_1$, $F^{2n+1}\ra h_2$.  Then for any $K$ compact subset of $U$ with $h_i\neq0,  \infty$ on $K$ there exists $C$ such that for $n$ large enough and  every    $P\in K$  we have 

 \begin{equation}\label{eqtn:sandwich growth}
  \frac{\left(\min_K|h_i|-\epsilon\right)^n}{C}\leq |z_n|\leq C\left(\max_K|h_i|+\epsilon\right)^n,
 \end{equation}
 where $(z_n,w_n):=F^{n}(P)$.
\end{cor}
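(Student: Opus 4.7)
The plan is to replay the argument of Proposition~\ref{prop:slow growth basic} along the two parity subsequences $\{2n\}$ and $\{2n+1\}$ simultaneously, stopping at the intermediate bound on $|z_n|$ itself rather than passing to $\|F^n(P)\|$. Writing $(z_n,w_n):=F^n(P)$, the form $F(z,w)=(g(z,w),z)$ gives $w_n=z_{n-1}$. Identifying each $h_i:U\to\ell^\infty$ with a meromorphic function on $U$ via the biholomorphism $\phi$ from the Introduction, the convergence $F^{2n}\to h_1$ and $F^{2n+1}\to h_2$ uniformly on $K$, together with the hypothesis $h_i\neq 0,\infty$ on $K$ (so that in particular $\min_{K,i}|h_i|>0$), translates into the ratio estimate
\[
\left|\frac{z_n(P)}{z_{n-1}(P)}-h_{i(n)}(P)\right|<\epsilon\qquad \forall\,n\geq n_\epsilon,\ \forall\,P\in K,
\]
where $i(n)=1$ for $n$ even and $i(n)=2$ for $n$ odd.

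The triangle inequality then yields
\[
\min_{K,i}|h_i|-\epsilon \;\leq\; \left|\frac{z_n(P)}{z_{n-1}(P)}\right| \;\leq\; \max_{K,i}|h_i|+\epsilon,
\]
and telescoping from $n_\epsilon$ to $n$,
\[
|z_n(P)|=|z_{n_\epsilon}(P)|\prod_{k=n_\epsilon+1}^{n}\left|\frac{z_k(P)}{z_{k-1}(P)}\right|,
\]
gives upper and lower bounds on $|z_n|$ that are constant multiples of the $(n-n_\epsilon)$-th powers of the right and left sides above. The constant $C=C(K)$ absorbs both $|z_{n_\epsilon}(P)|^{\pm 1}$ and the finitely many initial ratio factors.

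The only mildly delicate step is controlling $|z_{n_\epsilon}(P)|$ uniformly on $K$: the upper bound is immediate from continuity on the compact set $K$, while the lower bound uses that $F^n(P)\to h_{i(n)}(P)\in\ell^\infty\setminus\{[1:0:0]\}$ forces $|z_{n-1}(P)|=|w_n(P)|\to\infty$ uniformly in $P\in K$, so enlarging $n_\epsilon$ if necessary one has, say, $|z_{n_\epsilon}(P)|\geq 1$ on $K$. Everything else is a direct transcription of the ratio-and-telescope argument from the proof of Proposition~\ref{prop:slow growth basic}, now carried through both parities in parallel, and the corollary follows.
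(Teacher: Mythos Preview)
Your proof is correct and follows exactly the approach the paper intends: the paper simply states that the corollary ``follows from the proof of Proposition~\ref{prop:slow growth basic} and Proposition~\ref{prop:slow growth general}'', and your argument is a faithful unpacking of that, running the ratio-and-telescope estimate of Proposition~\ref{prop:slow growth basic} along both parity subsequences and stopping at the bound on $|z_n|$. Your added care in securing the uniform lower bound $|z_{n_\epsilon}(P)|\geq 1$ on $K$ via $|w_n|\to\infty$ is a welcome clarification of a step the paper leaves implicit.
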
 

\section{A transcendental H\'enon map with an invariant escaping Fatou component with two distinct limit functions of rank 1}\label{sect:Distinct limit functions rank 1}
 In this section we construct a family of transcendental H\'enon maps, each of which has an invariant escaping Fatou component with exactly two limit functions, both of which have (generic) rank 1.  Recall that  the \emph{rank} of a holomorphic functions $h$ at a point $P$ is the rank of its differential at $P$.  


\begin{prop}\label{prop:distinct limit functions}
Let  $a>1$ and let $f\!:\mathbb C\rightarrow \mathbb C$ be a nonlinear entire function which  is bounded in a right half plane. 
Let $F\!:\mathbb C^2\rightarrow \mathbb C^2$ be the transcendental H\'enon map  defined by 
$$
F(z,w)=\left(aw+f(z),z\right).
$$ 
Then $F$ has an invariant escaping Fatou component $U$  with exactly two distinct  limit functions $h_1, h_2: U\ra \ell^\infty$, both of which have  generic rank 1 and such that 
$F^{2n}(z,w)\ra h_1(z,w)$, $F^{2n+1}(z,w)\ra h_2(z,w)$ as $n\ra\infty$, uniformly on compact subsets of $U$.
\end{prop}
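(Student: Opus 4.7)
The plan is to exhibit a large forward-invariant region $V$ on which $F$ is a small perturbation of the linear map $L(z,w)=(aw,z)$, to analyze the iterates on $V$ by rescaling, and then to propagate the resulting convergence to the whole Fatou component via the identity principle. Fix $M,R_0>0$ with $|f(z)|\le M$ on $\{\Re z>R_0\}$ and pick $R\ge R_0$ large enough that $aR-M>R$ and $R>M/(a-1)$. Setting $V=\{\Re z>R\}\times\{\Re w>R\}$, a direct check shows $F(V)\subset V$: the second coordinate of $F(z,w)$ equals $z$, and the first has real part at least $a\,\Re w-M>R$. On $V$ the orbit $(z_n,w_n)=F^n(z,w)$ satisfies $w_n=z_{n-1}$ and the perturbed second-order recursion
\[
z_{n+1}=a\,z_{n-1}+f(z_n),\qquad z_0=z,\quad z_1=aw+f(z),
\]
with $|f(z_n)|\le M$ for every $n$.

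Next I would decouple even and odd indices via $\alpha_k=a^{-k}z_{2k}$ and $\beta_k=a^{-k}z_{2k+1}$. The recursion gives
\[
\alpha_{k+1}-\alpha_k=a^{-(k+1)}f(z_{2k+1}),\qquad \beta_{k+1}-\beta_k=a^{-(k+1)}f(z_{2k+2}),
\]
so both telescoping series converge absolutely and uniformly on compact subsets of $V$ to holomorphic limits $\Phi,\Psi$ on $V$, of the form $\Phi(z,w)=z+g_1(z,w)$ and $\Psi(z,w)=aw+f(z)+g_2(z,w)$ with $|g_i|\le M/(a-1)$. Because $R>M/(a-1)$, both $|\Phi|$ and $|\Psi|$ are bounded below by a positive constant on $V$, so $|z_n|$ grows like $a^{n/2}$ and every orbit escapes to $\ell^\infty$. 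Writing
\[
F^{2k}(z,w)=[\,a^k\alpha_k:a^{k-1}\beta_{k-1}:1\,],\qquad F^{2k+1}(z,w)=[\,a^k\beta_k:a^k\alpha_k:1\,]
\]
and rescaling the homogeneous coordinates by $a^{k-1}$ and $a^k$ respectively yields $F^{2k}\to[a\Phi:\Psi:0]$ and $F^{2k+1}\to[\Psi:\Phi:0]$ uniformly on compacta of $V$. Identifying $\ell^\infty$ with $\hat\C$ via $[p:q:0]\mapsto p/q$, the two candidate limits are
\[
h_1=\frac{a\Phi}{\Psi},\qquad h_2=\frac{\Psi}{\Phi},\qquad h_1h_2\equiv a.
\]

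Since the convergence is locally uniform on $V$, this region lies in the Fatou set; let $U$ be the Fatou component containing $V$. To extend the convergence to all of $U$, I would invoke normality: any subsequence of $\{F^{2n}\}$ admits a subsubsequence converging on $U$ whose restriction to $V$ equals $h_1$, so by the identity principle it agrees on $U$ with the unique holomorphic extension of $h_1$; hence $F^{2n}\to h_1$ on all of $U$. The same argument gives $F^{2n+1}\to h_2$ on $U$, and it rules out any further limit function, since every convergent subsequence of $\{F^n\}$ contains an infinite even or odd sub-subsequence which is already pinned down on $V$. For the generic rank 1 assertion, Cauchy estimates applied on polydisks of fixed radius strictly interior to $V$ to the uniformly bounded corrections $g_1,g_2$ give $\partial_z\Phi\approx 1$ and $\partial_w\Psi\approx a$, so $\Phi/\Psi$ is non-constant; therefore $h_1$ is non-constant, hence of generic rank 1, and so is $h_2=a/h_1$. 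The identity $h_1h_2\equiv a$ together with non-constancy also forces $h_1\ne h_2$.

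The main technical hurdle is the generic rank 1 claim, which requires controlling not only the sup-norm but also the derivatives of the tail series defining $\Phi,\Psi$; this is where the strict inequality $R>M/(a-1)$ and Cauchy estimates on polydisks interior to $V$ become essential. Once this is in place, invariance $F(U)=U$ is automatic because $F(V)\subset V\subset U$ and $F$ is an automorphism, so the Fatou component $F(U)$ meets $U$ and therefore equals it.
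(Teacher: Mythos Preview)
Your argument is correct and follows essentially the same route as the paper: build a forward-invariant product of half-planes, use the second-order recursion $z_{n+1}=az_{n-1}+f(z_n)$ to see that the rescaled even/odd coordinates telescope to holomorphic limits (the paper writes the same sums $z_0+\sum a^{-j}f(z_{2j-1})$ and $w_0+\sum a^{-j}f(z_{2j-2})$ without naming $\Phi,\Psi$), and read off $h_1$ and $h_2=a/h_1$. The only differences worth noting are that the paper proves nonconstancy more directly by fixing $w_0$ and letting $|z_0|\to\infty$ so that $|h_1|$ is unbounded (your Cauchy-estimate route works too, but to conclude $\Phi/\Psi$ is nonconstant you should also record $\partial_w\Phi\approx 0$, not just $\partial_z\Phi\approx 1$ and $\partial_w\Psi\approx a$), and that you are more explicit than the paper about propagating the convergence from $V$ to the full component $U$ via normality and the identity principle.
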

Observe that the condition that $f$ is nonlinear, entire and bounded in a right half plane  implies that $f$ is transcendental, and that  the function $f(z)=A e^{-kz}$ satisfies the hypothesis of the proposition for every $A,k>0$, as well as any finite linear combination of such function.   
Many more examples, even with the stronger assumption that $|f|\ra0$ as $\Re z\ra\infty$,  can be constructed using   tangential approximation, for example, the following relatively elementary result     [Gai87, Theorem 2'  page 153; see also page 142].  

\begin{thm}[Approximation]
Let $S\subset \C$ be a closed set such that  $\hat{\C}\setminus S$ is connected and locally connected at infinity. Let $h$ be holomorphic in the interior of ${S}$ and   continuous on $\ov{S}$ (the closure of $S$ in $\C$). Let $\epsilon>0$. 
Then there exists $g$ entire such that 
$$|g-h|<\epsilon\ \text{ on $S$ and  }\ |g(z)-h(z)|<\frac{1}{|z|}\ \text{as $|z|\ra\infty$ on $S$}.$$
\end{thm}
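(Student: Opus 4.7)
The theorem is a classical tangential strengthening of Arakelian's uniform approximation theorem, and my plan is to derive it from the uniform version by an iterated correction along an exhaustion. Under the stated hypotheses on $S$, Arakelian's theorem supplies uniform approximation: for every $\eta>0$ there exists an entire $g_\eta$ with $|g_\eta-h|<\eta$ on $S$. This alone handles the global bound $|g-h|<\epsilon$ on $S$; the additional content of the present statement is the pointwise decay $1/|z|$ at infinity, which demands improving the approximation scale by scale.

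Concretely, set $K_n = S\cap\{|z|\le 2^n\}$ and construct inductively entire functions $g_0,g_1,g_2,\ldots$ satisfying
\begin{enumerate}
\item $|g_n - h| < 2^{-n-2}$ on $K_n$,
\item $|g_n - g_{n-1}| < \epsilon/2^{n+2}$ on all of $S$,
\end{enumerate}
starting from a $g_0$ produced by one application of Arakelian with tolerance $\min(\epsilon/4,1/4)$. The inductive step, producing $g_{n+1}$ from $g_n$, applies Arakelian's theorem to a continuous function on a suitable closed set which coincides with $h-g_n$ on $K_{n+1}$ (where, by (1), it is already of size at most $2^{-n-2}$) and is tapered continuously to $0$ outside a small neighbourhood of $K_{n+1}$; the Arakelian tolerance is set small enough to yield both parts of the inductive hypothesis at level $n+1$.

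The sequence $g_n$ converges uniformly on $S$ by (2), and in fact locally uniformly on $\C$, so the limit $g$ is entire. Telescoping (2) gives $|g - g_0|_S < \epsilon/2$, hence $|g-h|_S < \epsilon$. For $z$ with $2^n \le |z| < 2^{n+1}$, so $z\in K_{n+1}$, one has
\[
|g(z)-h(z)| \le |g_{n+1}(z)-h(z)| + \sum_{k>n+1}|g_k(z)-g_{k-1}(z)| < 2^{-n-3} + \epsilon\cdot 2^{-n-2},
\]
which is less than $1/|z|$ for all $|z|$ large (after, if necessary, replacing $\epsilon$ by $\min(\epsilon,1)$ at the outset).

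The main obstacle is the inductive step: one must apply Arakelian not on $S$ itself but on a slight enlargement obtained by adding ``bridges'' to fill the bounded holes of $\hat{\C}\setminus S$ inside $K_{n+1}$, so that the tapered function is continuous on the enlarged set while its interior remains unchanged where the approximation is needed. Verifying that this enlargement still satisfies Arakelian's hypotheses—connectivity and local connectivity at infinity of the complement in $\hat{\C}$—is the genuinely geometric content of Gauthier's theorem, and it is here that the local-connectivity assumption on $\hat{\C}\setminus S$ at infinity is indispensable. Once the bridges are in place, the analytic telescoping above is routine.
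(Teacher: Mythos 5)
The paper offers no proof of this statement: it is quoted from Gaier's book [Gai87, Theorem~2$'$, p.~153], so there is nothing internal to compare against. Judged on its own terms, your argument has a genuine gap, and it sits exactly where you locate ``the main obstacle'' before declaring the rest routine.

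The inductive step cannot be carried out as described. To invoke Arakelian's theorem you must produce a function that agrees with $h-g_n$ on $K_{n+1}$, is tapered to $0$ away from $K_{n+1}$, and is still continuous on a closed Arakelian set \emph{and holomorphic in the interior of that set}. Tapering a holomorphic function to zero destroys holomorphy wherever the cutoff is non-constant, so the construction fails whenever $S$ has interior points in the annular region where the taper lives --- for instance when $S$ contains a closed half-plane, which is precisely the situation in which the paper applies the theorem. No system of ``bridges'' repairs this, because the obstruction is analytic, not topological. There is also a structural reason the scheme cannot be correct as stated: nothing in your bookkeeping uses the specific rate $1/|z|$. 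Replacing $2^{-n-2}$ in condition (1) by $e^{-2^{n+1}}$ would equally ``prove'' tangential approximation with arbitrarily fast decay, and that is known to be false for general Arakelian sets (Carleman-type approximation by entire functions requires Nersesyan's additional geometric condition on $S$). In Gaier's actual proof the admissible decay rates come out of a quantitative refinement of the proof of Arakelian's theorem itself, where the integral condition $\int_1^\infty t^{-3/2}\log\bigl(1/\epsilon(t)\bigr)\,dt<\infty$ (satisfied by $\epsilon(t)=1/t$) arises from harmonic-measure estimates; the result is not obtained by iterating the qualitative uniform statement. A secondary gap: uniform convergence of $g_n$ on $S$ does not yield local uniform convergence on $\C$, so your limit $g$ is not known to be entire --- the differences $g_k-g_{k-1}$ are controlled only on $S$, and enlarging the approximation sets by large disks to fix this would again require verifying Arakelian's hypotheses for the enlarged sets.
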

Indeed, if we set  $h=0$ on the right half plane and anything you like in, say, a finite collection of topological disks with pairwise disjoint closure (which do not intersect the right half plane ), then the approximating $g$ will satisfy the assumptions of Proposition~\ref{prop:distinct limit functions}.

From the perspective of  the identification of  $\ell^\infty$ with $\hat{\C}$ that associates to the point $[p:q:0]\in\ell^\infty$ the point $\frac{p}{q}\in\hat{\C}$, with $p,q\in\C$, the limit functions $h_i$ are meromorphic functions from $U$ to $\hat{\C}$.

\begin{proof}[Proof of Proposition~\ref{prop:distinct limit functions}]
Given $(z_0,w_0)$ in $\C$ we define  $(z_n,w_n):=F^n(z_0,w_0)$. For $R>0$ we define the set
 \begin{equation}\label{eq:WR}
  W_R:=\left\{(z,w)\in\C^2: \Re z,\Re w>R\right\}.
\end{equation}
Fix $\epsilon>0$. Since  $f$ is bounded in a right half plane, for any  $R$ sufficiently large  we have  that $|f(z)|<(a-1)R-\epsilon$ for all $z$ with $\Re z>R$. Let $W=W_R$ for any $R$  which satisfies this condition. 
Then  for $(z_0,w_0)\in W$ we have 
\begin{align*}
\Re z_1&= a\Re w_0+\Re f(z_0)>aR-R+R-|f(z_0)|>R+\epsilon \\
\Re w_1&=  \Re z_0>R
\end{align*}
and hence $W$  is forward invariant and  $\Re z_n, \Re w_n\ra\infty$ if $z_0,w_0\in W$. It follows that  $F^n(z_0, w_0)\ra\ell^\infty$. 

We now show convergence of the subsequences $F^{2n}$ and $F^{2n+1}$ on $W$, implying that $W$ is contained in an escaping Fatou component. 

A recursive  computation gives that 
\begin{align}
\label{eqtn:recursive even}F^{2n}(z_0,w_0)&=\left(a^nz_0+a^n\sum_{j=1}^na^{-j}f(z_{2j-1}),a^nw_0+a^n\sum_{j=1}^na^{-j}f(z_{2j-2})\right),\\
\label{eqtn:recursive odd}F^{2n+1}(z_0,w_0)&=\left(a^{n+1}w_0+a^{n+1}\sum_{j=1}^{n+1}a^{-j}f(z_{2j-2}),a^nz_0+a^n\sum_{j=1}^na^{-j}f(z_{2j-1})\right).
\end{align}
Consider the ratio 
\begin{equation}\label{eqtn:Miky pari}
\frac{z_{2n}}{w_{2n}}=\frac{a^nz_0+\sum_{j=1}^na^{n-j}f(z_{2j-1})}{a^nw_0+\sum_{j=1}^na^{n-j}f(z_{2j-2})}=\frac{z_0+\sum_{j=1}^na^{-j}f(z_{2j-1})}{w_0+\sum_{j=1}^na^{-j}f(z_{2j-2})},
\end{equation}
 and the ratio
\begin{equation}\label{eqtn:Miky dispari}
\frac{z_{2n+1}}{w_{2n+1}}=\frac{a^{n+1}w_0+a^{n+1}\sum_{j=1}^{n+1} a^{-j}f(z_{2j-2})}{a^{n}z_0+a^{n}\sum_{j=1}^na^{-j}f(z_{2j-1})}= \frac{a(w_0+\sum_{j=1}^{n+1} a^{-j}f(z_{2j-2}))}{z_0+\sum_{j=1}^na^{-j}f(z_{2j-1})}.
\end{equation}
Set 
 \begin{equation}\label{eqtn:Delta}
\Delta:=\max\left(\left|\sum_{j=1}^\infty a^{-j}f\left(z_{2j-1}\right)\right|, \left|\sum_{j=1}^\infty a^{-j}f\left(z_{2j-2}\right)\right|\right ). 
  \end{equation}
 Using the assumption  that $|f(z)|$ is bounded   for $\Re z>R$ we get that 
  \begin{equation}\label{eqtn:DeltaBound}
\Delta \leq \sum_{j=1}^\infty\left|a^{-j/2}f(z_{j})\right|<\sup_{\Re z>R}\left|f(z)\right| \sum_{j=1}^\infty \left|a^{-j/2}\right|<\infty. 
  \end{equation}
 Hence we can take the limit as $n\ra\infty$ in (\ref{eqtn:Miky pari}) and (\ref{eqtn:Miky dispari}) to obtain
\begin{align}\label{eqtn:definition of h1}
h_1(z_0,w_0):=&\lim_{n\rightarrow \infty}\frac{z_{2n}}{w_{2n}}=\frac{z_0+\sum_{j=1}^\infty a^{-j}f(z_{2j-1})}{w_0+\sum_{j=1}^\infty a^{-j}f(z_{2j-2})},\\
h_2(z_0,w_0):=&\lim_{n\rightarrow \infty}\frac{z_{2n+1}}{w_{2n+1}}=\frac{aw_0+a\sum_{j=1}^\infty a^{-j}f(z_{2j-2})}{z_0+\sum_{j=1}^\infty a^{-j}f(z_{2j-1})}=\frac{a}{h_1}.
\end{align} 
Both the numerator and the denominator in $h_1,h_2$ are nonconstant holomorphic functions from $W$ to $\C$, indeed, by taking  two points $(z_0,w_0),(z_0', w_0')\in W$ with $|z_0-z_0'|, |w_0-w_0'|>2\Delta$ we have that $h_i(z,w)\neq h_i(z',w')$. So $h_1$ and $h_2$ are holomorphic functions from $W$ to $\hat{\C}$.
 
 We now show that $h_1,h_2$ are nonconstant; By Sard's Theorem and since $h_i(W)$ is contained in the line at infinity, $h_1,h_2$ have generic rank 1. Since $h_1=\frac{a}{h_2}$, this also implies that $h_1
 \neq h_2$.  Suppose for a contradiction that 
 $|h_1|=c$ is constant. 
  Then one has:
$$
|z_0|-\Delta\leq \left|z_0+\sum_{j=1}^\infty a^{-j}f(z_{2j-1})\right|=c\left|w_0+\sum_{j=1}^\infty a^{-j}f(z_{2j-2})\right|\leq c|w_0|+c\Delta,
$$
hence 
$$
|z_0|\leq c|w_0|+(c+1)\Delta,
$$

contradicting the fact that $(z_0,w_0)$ could be any point in   $W$, which is unbounded in the $z$ direction for any choice of $w$.
%
%
%
     
 \end{proof}

\subsection{Image of the limit functions $h_1, h_2$}
In this section we show that the image of  the limit functions $h_1, h_2$ contains the line at infinity minus $0,\infty$.
\begin{prop}\label{prop:image of h1} Let $F,U, h_1, h_2$ be as in Proposition~\ref{prop:distinct limit functions}.  Then
$$
h_1(U), h_2(U)\supset \ell^\infty\setminus\{ [0:1:0], [1:0:0]\}.
$$
\end{prop}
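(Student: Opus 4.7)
Under the identification $\ell^\infty\setminus\{[1:0:0],[0:1:0]\}\cong\C^*$ given by $\phi:[p:q:0]\mapsto p/q$, and using $h_2=a/h_1$ together with $h_1(U)=h_2(U)$ (which follows from $F(U)=U$ and $h_1\circ F=h_2$), the claim reduces to showing $h_1(U)\supset\C^*$: the statement for $h_2$ then follows because $\{a/c : c\in\C^*\}=\C^*$. So I fix $c\in\C^*$ and look for $(z_0,w_0)\in U$ with $h_1(z_0,w_0)=c$.

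The main tool will be the explicit formula $h_1=(z_0+A)/(w_0+B)$ derived in the proof of Proposition~\ref{prop:distinct limit functions}, valid on $W$ with $|A|,|B|\leq\Delta$. The equation $h_1=c$ becomes $\psi_{w_0}(z_0):=z_0-cw_0+(A-cB)=0$, which I solve by Rouch\'e's theorem in $z_0$: for $w_0\in\{\Re w>R\}$ with $\Re(cw_0)>R+(|c|+1)\Delta$, the closed disk $D(cw_0,(|c|+1)\Delta+\epsilon)$ is contained in $\{\Re z>R\}$; on its boundary $|A-cB|\leq(|c|+1)\Delta<|z_0-cw_0|$, so comparing $\psi_{w_0}$ with $z_0\mapsto z_0-cw_0$ yields a unique zero in the disk. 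Writing $c=|c|e^{i\theta}$ with $\theta\in(-\pi,\pi]$, the choice $w_0=re^{-i\theta/2}$ gives $\Re w_0=r\cos(\theta/2)$ and $\Re(cw_0)=|c|r\cos(\theta/2)$, both of which can be made arbitrarily large whenever $|\theta|<\pi$. This covers every $c\in\C^*\setminus(-\infty,0)$ by points of $W\subset U$.

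The hard part is $c\in(-\infty,0)$, since for negative real $c$ no $w_0\in\{\Re w>R\}$ makes $\Re(cw_0)$ positive. To deal with this I work inside the backward iterates $F^{-n}(W)\subset U$. A direct induction using $F^{-1}(z,w)=(w,(z-f(w))/a)$ shows that $F^{-n}(W)$ contains sets of the form $\{\Re z>\alpha_n,\ \Re w>\beta_n\}$, with $\alpha_n,\beta_n\searrow -M/(a-1)$ as $n\to\infty$, where $M=\sup_{\Re z>R}|f|$. On these enlarged domains the formula $h_1=(z_0+A)/(w_0+B)$ persists (by $F^2$-invariance of $h_1$; the series defining $A,B$ still converge on each compact subset, bounded by a constant depending on the subset rather than by the uniform $\Delta$), and $\Re w_0$ may now be chosen slightly negative, so $\Re(cw_0)>0$ for $c<0$. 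Applying Rouch\'e as above with this adjusted constant in place of $\Delta$ produces the required solution. The main technical obstacle will be the uniform bound for $|A|,|B|$ on the Rouch\'e disk, which I would handle by splitting the defining series into a finite head (bounded on any compact subset of $U$ because $f$ is entire) and a tail (bounded using $|f|\leq M$ once the forward orbit has entered $W$).
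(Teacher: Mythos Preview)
For $c\in\C^*\setminus(-\infty,0)$ your argument is correct and somewhat cleaner than the paper's: you fix $w_0$ and apply Rouch\'e in the $z_0$-variable to $z_0\mapsto z_0-cw_0+(A-cB)$, whereas the paper works on a disk transverse to the line $\{z=cw\}$ and compares $h_1$ with the model $h_0(z,w)=z/w$. Both approaches run into the same obstruction on the negative real axis, where no $w_0$ with $\Re w_0>R$ can satisfy $\Re(cw_0)>R$.

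Your attempt to handle $c<0$ contains a sign error in the induction. The condition $F(z,w)\in\{\Re z>\alpha_n,\ \Re w>\beta_n\}$ reads $\Re z>\beta_n$ together with $a\Re w+\Re f(z)>\alpha_n$; since $\Re f(z)$ may be as small as $-M$, guaranteeing the second inequality forces $\Re w>(\alpha_n+M)/a$, not $(\alpha_n-M)/a$. The recursion $\alpha_{n+1}=\beta_n$, $\beta_{n+1}=(\alpha_n+M)/a$ therefore has fixed point $+M/(a-1)>0$, so the product sets you obtain inside $\bigcup_n F^{-n}(W)$ never acquire a coordinate half-plane reaching negative real parts, and the Rouch\'e disk you describe cannot be placed. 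In fact no repair along these lines is available: writing $\phi=(\phi_1,\phi_2)$ for the conjugacy of Section~\ref{sect:conjugacy} one has $h_1=\phi_1/\phi_2$, and in the case $f(z)=e^{-z}$ the paper establishes $\phi(U)=\H\times\H$, whence $h_1(U)=\{u/v:\Re u,\Re v>0\}=\C\setminus(-\infty,0]$, which omits every negative real value. The paper's own proof shares this gap (its choice $w_0=2M+R$ for real $c$ yields $(cw_0,w_0)\notin W$ when $c<0$), so the stated inclusion appears to fail on the negative real axis rather than merely resisting your method.
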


The idea of the proof is to show that $h_1$ is close enough to the model function $h_0(z,w):=\frac{z}{w}$ on suitable disks contained in $W$, and then to use the fact that $h_0(W)$ satisfies the claim together with Rouch\'e's Theorem to deduce the claim for $h_1$. The claim for $h_2$ follows because $h_2=\frac{a}{h_1}$.

\begin{thm}[Rouch\'e's Theorem] Let $D\subset\C$ be a Jordan  domain, $f,g$ be holomorphic in a neighborhood of $\ov{D}$.  Assume that $c\in g(D) $ and that 
$$
|f-g|<\dist\left(c,g(\partial D)\right) \qquad {\textrm on}\ \partial D.
$$
Then $c\in f(D)$.
\end{thm}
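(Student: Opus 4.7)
The plan is to reduce the statement to the classical version of Rouch\'e's theorem (the ``same number of zeros'' formulation). The natural shift is to subtract the target value: set $F(z):=f(z)-c$ and $G(z):=g(z)-c$, both holomorphic in a neighborhood of $\ov{D}$. Under this translation, the conclusion $c\in f(D)$ is equivalent to $F$ having a zero in $D$, and the hypothesis $c\in g(D)$ says that $G$ has at least one zero in $D$.

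The first step is to verify the classical Rouch\'e inequality $|F-G|<|G|$ on $\partial D$. For any $z\in\partial D$, the point $g(z)$ lies in $g(\partial D)$, so by the definition of distance, $|G(z)|=|g(z)-c|\geq \dist(c,g(\partial D))$. Combining with the standing hypothesis, on $\partial D$ we have
$$
|F(z)-G(z)|=|f(z)-g(z)|<\dist(c,g(\partial D))\leq |G(z)|.
$$
Along the way, this also shows $G\neq 0$ on $\partial D$ (and automatically $F\neq 0$ on $\partial D$), so the argument principle is well-defined on $\partial D$. Note that the strict inequality in the hypothesis silently forces $\dist(c,g(\partial D))>0$, i.e., $c\notin g(\partial D)$; this harmless observation is worth spelling out so that the winding number arguments below are meaningful.

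The second step is to invoke the classical Rouch\'e theorem: since $D$ is a Jordan domain and $|F-G|<|G|$ on $\partial D$, the functions $F$ and $G$ have the same number of zeros in $D$, counted with multiplicity. By hypothesis $G$ has at least one zero in $D$, so $F$ does as well, yielding $c\in f(D)$.

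There is no real obstacle here: the proof is essentially a one-line reduction. The only step one might want to justify in more detail is the classical Rouch\'e theorem itself, which in turn follows from the argument principle applied to $F/G$ along the Jordan curve $\partial D$ using the homotopy $G+t(F-G)$, $t\in[0,1]$, all of whose terms are nonvanishing on $\partial D$ by the inequality above.
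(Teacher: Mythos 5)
Your proof is correct. The paper states this version of Rouch\'e's theorem as a classical tool and gives no proof of it, so there is nothing to compare against; your reduction --- translating by $c$, observing that $|g-c|\ge\dist(c,g(\partial D))$ on $\partial D$ so that the hypothesis becomes the standard inequality $|F-G|<|G|$ on $\partial D$, and then invoking the zero-counting form of Rouch\'e --- is exactly the standard derivation of this ``value-attainment'' formulation. The only point one could polish is the application of the argument principle on a general Jordan curve (which need not be rectifiable); since $f,g$ are holomorphic on a neighborhood of $\ov D$ and the strict inequality persists on a neighborhood of $\partial D$ by compactness, one passes to a smoothly bounded subdomain containing the zeros of $G$, and the argument goes through.
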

  
Observe that, for $c\in\C$, the complex line $L_c=\{(cw,w):\ w\in\C\}$  passing through the origin is mapped to the point $c$ under the map $h_0(z,w)=\frac{z}{w}$. Similarly,  the preimage of   $c=\infty$ under  $h_0$ is the line $L_\infty:=(\{(z,0):\ z\in\C\})$. We first need a lemma about the size of disks contained in $W$ whose center is a point $(c w_0,w_0)\in L_c$ and which is contained in a line 
 orthogonal to $L_c$, that is, a line of the form $\{(c w_0,w_0)+(-w,\overline c w)\}$ with $w\in\C$.  Let $\D_\delta$ denote  the Euclidean  disk of radius $\delta$ centered at the origin.
\begin{lem}\label{lem:maximal disk} For $c\in\C$,  $\delta>0$, and $(c w_0, w_0)\in W$ let $D_{c,\delta}(w_0)$ be the disk of radius $\sqrt{1+|c|^2} \delta$ defined as 
$$D_{c,\delta}(w_0)=\left\{(z,w)\in\C^2: (z,w)=(c w_0,w_0)+t(-1,\overline c), t\in\D_\delta\right\}.$$ 
Then  $D_{c,\delta}(w_0)\subset W$ for 
$$\delta= \min\left(\frac{|\Re w_0-R|}{|c|}, |\Re c \Re w_0-\Im c\Im w_0-R|\right)\,.$$
\end{lem}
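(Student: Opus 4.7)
The plan is straightforward: parametrize a point of $D_{c,\delta}(w_0)$ and verify directly that both its real parts exceed $R$ whenever $t\in\D_\delta$, provided $\delta$ is taken as stated. Because $W$ is defined by two independent half-plane conditions (one on $\Re z$, one on $\Re w$), the argument naturally splits into two coordinate estimates, each of which will contribute one of the two terms in the minimum.

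First I would write a generic point of $D_{c,\delta}(w_0)$ as $(z,w)=(cw_0-t,\,w_0+\overline{c}\,t)$ with $t\in\D_\delta$, so that $\Re z=\Re(cw_0)-\Re t=\Re c\,\Re w_0-\Im c\,\Im w_0-\Re t$ and $\Re w=\Re w_0+\Re(\overline{c}\,t)$. Since $(cw_0,w_0)\in W$, both of the quantities $\Re c\,\Re w_0-\Im c\,\Im w_0-R$ and $\Re w_0-R$ are strictly positive, so the absolute values in the statement of the lemma are cosmetic and I can drop them.

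For the first coordinate, the elementary bound $|\Re t|\le |t|<\delta$ together with $\delta\le \Re c\,\Re w_0-\Im c\,\Im w_0-R$ gives $\Re z>R$. For the second coordinate, the bound $|\Re(\overline{c}\,t)|\le |c|\,|t|<|c|\delta$ together with $|c|\delta\le \Re w_0-R$ gives $\Re w>\Re w_0-(\Re w_0-R)=R$. Taking $\delta$ to be the minimum of the two admissible bounds yields both inequalities simultaneously, which is exactly the claim. Finally, the Euclidean radius assertion follows simply from $\|(-1,\overline{c})\|=\sqrt{1+|c|^2}$, so that $(z,w)$ varies over a Euclidean disk in $\C^2$ of radius $\sqrt{1+|c|^2}\,\delta$ centered at $(cw_0,w_0)$.

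There is no real obstacle here: the lemma is a geometric bookkeeping statement, and the only thing to be careful about is distinguishing the two half-plane conditions and tracking how the direction $(-1,\overline{c})$ scales the real-part perturbations by $1$ and by $|c|$ respectively, which is precisely what produces the asymmetric factor of $|c|$ in the first term of the minimum but no such factor in the second.
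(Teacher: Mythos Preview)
Your proof is correct and follows essentially the same approach as the paper's: both split into the two half-plane conditions and bound the real-part perturbation coming from the direction $(-1,\overline c)$. The paper phrases the computation as finding the Euclidean distance in the $(\Re t,\Im t)$-plane from the origin to the two lines $\Re(cw_0-t)=R$ and $\Re(w_0+\overline c\,t)=R$, while you bound $|\Re t|\le|t|$ and $|\Re(\overline c\,t)|\le|c|\,|t|$ directly; these are the same estimate and yield the same two terms in the minimum.
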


\begin{proof}
A point $(z,w)\in\partial W$ satisfies either $\Re w= R$ or $\Re z=R$, so to find the maximal $\delta$ such that $D_{c,\delta}(w_0)\subset W$ we   impose the conditions
\begin{equation}
\begin{split}
  \Re \left(c w_0-t\right)&=R \\
  \Re\left(w_0+t \overline{c}\right)&=R
\end{split}\nonumber
\end{equation}
and find the minimal $\delta$ for which one is verified for some  $|t|=\delta$.  The above equations are equivalent to  
\begin{equation}
\begin{split}
    \Re c\Re w_0- \Im c \Im w_0 - \Re t   &=R \\
\Re w_0+ \Re c\Re t+\Im c\Im t &=R.
\end{split}\nonumber
\end{equation}
By setting $\Re t=x, \Im t=y$, finding the minimal $\delta$ is equivalent to finding the distance in $\R^2$ from the origin of two lines of the form
$$
Ax + By+C=0. 
$$
For the first equation $A=-1, B=0, C=\Re c\Re w_0-\Im c\Im w_0-R$. 
The distance of such a line from the origin is given by 

$$
\frac{|C|}{\sqrt{A^2+B^2}}
=  |\Re c\Re w_0-\Im c\Im w_0-R|. 
$$
For the second equation $A= \Re c$, $B=\Im c$, $C=\Re w_0-R$, and the distance of such a line from the origin is
$$\frac{|C|}{\sqrt{A^2+B^2}}
=\frac{|\Re w_0-R|}{|c|}$$
hence the theses.  
\end{proof} 
\begin{proof}[Proof of Proposition~\ref{prop:image of h1}] 
We will show that 
$h_1(W), h_2(W)\supset \ell^\infty\setminus\{0,\infty\}$, which implies the claim. 
Let 
$h_0(z,w):= \frac{z}{w}.$ It is easy to check that $h_0(W)=\ell^\infty$. We use this fact to show that for any  $c\in\hat{\C}\setminus\{ 0, \infty\}$, $c\in h_1(W)$.

In view of Rouch\'e's Theorem it is enough to find $r>0$ and a    one-dimensional disk $D\subset W$ such that 
\begin{itemize}
\item $h_0(D)$ contains a disk of radius $r$ centered at $c$ in $\ell^\infty\setminus \{0,\infty\}$;
 \item $|h_1- h_0|<r$ on $\partial D$. 
\end{itemize} 
Let $$w_0=M+R+i\frac{(M+R)\Re c-2M-R}{\Im c}\ \ \textrm{if}\ \ \Im c\neq 0$$ or $$w_0=2M+R\ \ \textrm{if}\ \ \Im c=0.$$
We claim that for  $M>0$  sufficiently large, the disk $D:= D_{c,\delta}(w_0)$  centered in $(cw_0,w_0)$  with   $\delta= (M-1)\min(|c|,\frac{1}{|c|})$   is contained in $W$ and satisfies the  requirements. 

We first check that  $D\subset W$. In view of Lemma~\ref{lem:maximal disk} we only need to check that
$$
\delta\leq \min\left(\frac{|\Re w_0-R|}{|c|}, \left|\Re c \Re w_0-\Im c\Im w_0-R\right|\right).
$$

If $\Im c\neq0$,  $\frac{|\Re w_0-R|}{|c|}=\frac{M}{|c|}>\delta$ and $|\Re c\Re w_0-\Im c\Im w_0-R|=2M>\delta$ for all choices of $M>0$. 

If $\Im c= 0$,    $\frac{|\Re w_0-R|}{|c|}=\frac{2M}{|c|}>\delta$,  and $|\Re c\Re w_0-\Im c\Im w_0-R|=|(2M+R)\Re c-R|\geq  (2M+R)|\Re c|-R\geq \delta$ for $M$ large enough.

From now on, it is no longer necessary to divide the two cases. We now compute the distance  $|h_0(\partial D)-c|.$ Let $t\in\C$, $|t|=\delta$ and $(z,w)=(c w_0, w_0)+t(-1, \ov{c})\in\partial D$. Then
$$
\left|h_0(z,w)-c\right|=\left| \frac{c w_0- t}{w_0+\ov{c}t} -c\frac{w_0+\ov{c}t}{w_0+\ov{c}t}\right|=\frac{|t|\left(1+|c|^2\right)}{\left|w_0+\ov{c}t\right|}=:A.
$$
We want to compare this with  $|h_0-h_1|$ on $\partial D$. Let us define
$$
k_1=k_1(z,w):= \sum_{j=1}^\infty a^{-j}f(z_{2j-1}),\qquad k_2=k_2(z,w):=\sum_{j=1}^\infty a^{-j}f(z_{2j-2}),
$$
 and note that $|k_1|,|k_2|$  are bounded uniformly in  $W$ (see (\ref{eqtn:DeltaBound})). Let $(z,w)\in \partial D$ as before. Then
$$
 \left|(h_1-h_0)(z,w)\right| = \left|\frac{z+k_1}{w+k_2}  -\frac{z}{w}  \right|               
= \left|\frac{k_1w-k_2 z}{w(w+k_2)}\right|=\frac{\left|(k_1-ck_2)w_0+(\ov{c}k_1-k_2)t\right|}{\left|w_0+\ov{c}t\right|\left|w_0+\ov{c}t+k_2\right|}=:B. 
$$
Calculating the ratio
$$
\frac{A}{B}=\frac{|t|\left(1+|c|^2\right)\left|w_0+\ov{c}t+k_2\right|}{\left|(k_1-ck_2)w_0+(\ov{c}k_1-k_2)t\right|}\ra\infty \quad\textrm{ as}\  M\ra\infty,
$$
since the numerator is a polynomial of degree $2$ in $M$ and the denominator is a polynomial of degree  $1$ in $M$ (indeed, both $|t|$ and $\Re w_0$ grow linearly in $M$). This implies that for $M$ large enough, $D$ satisfies the requirement for  Rouch\'e's Theorem, and hence $h_1(W)\supset \ell^\infty\setminus \{[1:0:0], [0:1:0]\}$. Since   $h_2=\frac{a}{h_1}$, the same holds for $h_2$.
\end{proof}

\begin{rem}
 If $R>\sup_{W_R}|\Delta|$, then we have precisely that $h_i(W_R)=\ell^\infty\setminus\{[0:1:0], [1:0:0]\}$, because the numerator and the denominator in (\ref{eqtn:definition of h1}) cannot attain the exact value $0$. If  $W_R$ is an absorbing domain for $U$ as in Section~\ref{sect:absorbing}, then  $h_i(U)=\ell^\infty\setminus\{[0:1:0], [1:0:0]\}$.
\end{rem}

\subsection{Conjugacy of $F$ to a linear map}\label{sect:conjugacy}
  
 \begin{prop}Let $F$ be as in Proposition~\ref{prop:distinct limit functions}. Then for $R$ sufficiently large $F$ is conjugate to the linear map $L(z,w)=(aw,z)$ on the set $\bigcup_{n\geq0} F^{-n}(W)$, where $W=W(R)=\{(z,w)\in\C^2: \Re z,\Re w>R\}$.
 \end{prop}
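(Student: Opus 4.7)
The plan is to construct the conjugacy via the B\"ottcher-type formula $\Phi := \lim_{n\to\infty} L^{-n}\circ F^n$, first on $W$ and then by pullback to the whole set $V := \bigcup_{n\geq 0} F^{-n}(W)$. Since $L^2(z,w)=(az,aw)$, one has $L^{-2n}(z,w)=(a^{-n}z,a^{-n}w)$ and $L^{-(2n+1)}(z,w)=(a^{-n}w,a^{-n-1}z)$, so dividing the formulas \eqref{eqtn:recursive even}--\eqref{eqtn:recursive odd} by the appropriate powers of $a$ shows that both subsequences $L^{-2n}\circ F^{2n}$ and $L^{-(2n+1)}\circ F^{2n+1}$ converge uniformly on compact subsets of $W$ to the same holomorphic map
\[
\Phi(z_0,w_0) = \bigl(z_0 + k_1(z_0,w_0),\, w_0 + k_2(z_0,w_0)\bigr),
\]
where $k_1,k_2$ are the series appearing in the proof of Proposition~\ref{prop:distinct limit functions} and are bounded by $\Delta$ on $W$ by \eqref{eqtn:DeltaBound}. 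The conjugacy relation $\Phi\circ F = L\circ\Phi$ on $W$ then follows directly from the limit definition by a shift of index inside the limit.

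The main difficulty is to establish global injectivity of $\Phi$ on $W$, since a bounded perturbation of the identity is not automatically injective. My plan is to invoke the several-variables analogue of Hurwitz's theorem: a locally uniform limit of injective holomorphic maps between domains of $\C^2$ is either injective or has identically vanishing Jacobian determinant. Each approximant $L^{-n}\circ F^n$ is an automorphism of $\C^2$, hence injective on $W$. Writing $\Phi = \mathrm{id} + (k_1,k_2)$, a Cauchy estimate on the polydisc of polyradii $(\Re z_0 - R,\,\Re w_0 - R)\subset W$ centered at $(z_0,w_0)$ yields
\[
\left|\frac{\partial k_i}{\partial z}\right|,\ \left|\frac{\partial k_i}{\partial w}\right| \leq \frac{\Delta}{\min(\Re z_0,\Re w_0) - R}.
\]
Thus $d\Phi\to\mathrm{Id}$ as $\min(\Re z_0,\Re w_0)\to\infty$ in $W$, so $d\Phi$ is invertible at some point, and the Hurwitz-type theorem delivers injectivity of $\Phi$ on $W$ (which is connected as a product of two half-planes).

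To conclude, I would extend $\Phi$ to $V$ by setting $\Phi(p) := L^{-n}(\Phi(F^n(p)))$ for any $n$ with $F^n(p)\in W$. Well-definedness is immediate from the conjugacy on $W$, since for $m>n$ with $F^m(p)\in W$ one has $\Phi(F^m(p))=L^{m-n}\Phi(F^n(p))$. Holomorphicity is built into the construction, the relation $\Phi\circ F = L\circ\Phi$ on $V$ is immediate, and global injectivity on $V$ follows from injectivity on $W$ together with $F$ being an automorphism: if $\Phi(p)=\Phi(q)$, choose $n$ with $F^n(p),F^n(q)\in W$, deduce $F^n(p)=F^n(q)$ from injectivity on $W$, and conclude $p=q$. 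The only truly delicate point in this plan is the global injectivity on $W$, which is handled by the Hurwitz-type argument above.
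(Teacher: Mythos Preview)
Your proposal is correct and follows essentially the same approach as the paper: define $\Phi_n=L^{-n}\circ F^n$, compute the limit explicitly from \eqref{eqtn:recursive even}--\eqref{eqtn:recursive odd}, invoke the several-variables Hurwitz theorem for injectivity, and extend by pullback using the functional equation. You actually supply more detail than the paper on two points: the paper simply asserts that the limit ``has rank 2'' without justification, whereas your Cauchy-estimate argument showing $d\Phi\to\mathrm{Id}$ as $\min(\Re z_0,\Re w_0)\to\infty$ is a clean way to see this; and you verify injectivity of the extended map explicitly, which the paper leaves implicit.
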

 \begin{proof}
 Recall that $|f(z)|$ is bounded by some constant, say $M$, for $R$ large enough and $\Re z> R$. Let $W:=W(R)$ for such $R$.  It is easy to check that $L^{-n}(z,w)=(\frac{z}{a^{n/2}},\frac{w}{a^{n/2}})$ if $n$ is even and $L^{-n}(z,w)=(\frac{w}{a^{(n-1)/2}},\frac{z}{a^{(n+1)/2}})$ if $n$ is odd; hence by  a direct computation,  using the fact that $a>1$,  for any $n\in\N$ and for any $P\in \C^2$ we have that  $\|L^{-n}(P)\|\leq a^{-\frac{n-1}{2}}\|P\|$.  
 
 Let $\phi_n:\C^2\ra\C^2$ be the automorphisms defined as
  $$
\phi_n:=L^{-n}\circ F^n. 
 $$
We will show that the  $\phi_n$ converge to a map $\phi:\C^2\ra\C^2$   uniformly on  $W$. 
 Since the $\phi_n$ satisfy the functional equation $\phi_{n+1}=L^{-1}\circ\phi_n\circ F$, %
  the map $\phi$ is a conjugacy between $F$ and $L$.

 Using the explicit expressions for the iterates of $F$ given by  (\ref{eqtn:recursive even}) and (\ref{eqtn:recursive odd})  we compute 
 \begin{align}
 \phi_{2k}(z,w)&=\left(z+\sum_{j=1}^ka^{-j}f(z_{2j-1}),w+\sum_{j=1}^ka^{-j}f(z_{2j-2})\right),\\
\phi_{2k+1}(z,w)&=\left(z+\sum_{j=1}^ka^{-j}f(z_{2j-1}),w+\sum_{j=1}^{k+1}a^{-j}f(z_{2j-2})\right),
\end{align}
 and taking the limit we obtain 
 $$
 \phi (z,w)=\left(z+\sum_{j=1}^\infty a^{-j}f(z_{2j-1}),w+\sum_{j=1}^\infty a^{-j}f(z_{2j-2})\right),
 $$
which is a biholomorphism between $W$ and $\phi(W)$ since both series converge because $f$ is bounded in a right half plane. It is injective by Hurwitz Theorem because the maps $\phi_n$ are injective  and their limit has rank 2 (see \cite{Krantz}, Exercise 3 on page 310). 
  
  For any $P\in F^{-k}(W)$ we extend $\phi$ as $\phi(P)= L^{-k}\circ \phi\circ F^k(P)
   $. Since $F$ is an automorphism and since $\phi\circ F=L\circ\phi$, the extension of  $\phi$ (which we still denote by $\phi$)  is well defined as a biholomorphism  from  $\bigcup_{n\geq0} F^{-n}(W)$ to  $\bigcup_{n\geq0} L^{-n}(\phi(W))$. 
  \end{proof}  
  
 \begin{rem}
 If $|f(z)|\ra0$ as $\Re z\ra\infty$, instead of just being bounded, and since the real parts of $z,w$ are increasing under iteration,  we have that $\phi(z,w)$ tends to the identity as $\Re z, \Re w \ra\infty$. However in general this may not be the case.
 \end{rem}
  
\subsection{Geometric structure of $U$ for $f(z)=e^{-z}$}\label{sect:absorbing}
In this section we prove that, in the special case that $f(z)=e^{-z}$, the Fatou component  $U$ is the union of the backwards images of $W$. As a corollary, using the linearization results from Section~\ref{sect:conjugacy} we obtain that $U$ is biholomorphic to $\H\times\H$.  In fact, the proof holds for any $f$ satisfying the hypothesis of Theorem~\ref{prop:absorbing domain} as long as $|f(z)|$ grows fast enough for $\Re z\ra-\infty$. It is based on a modification of the plurisubharmonic method used in \cite{henon1}, Section 5.
\begin{prop}\label{prop:absorbing domain}Let $F(z,w)=(aw+e^{-z},z)$, and $ U$ be as in Proposition~\ref{prop:distinct limit functions}. For $R$ sufficiently large and  $W=\{(z,w)\in\C:\Re z,\Re w>R\}$, the set $W$ is an absorbing domain for $U$, that is
$$U=A\ := \ \bigcup_{n\in\mathbb N}F^{-n}(W).$$
\end{prop}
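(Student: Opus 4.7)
I would first dispatch the easy inclusion $A\subset U$: $W\subset U$ by Proposition~\ref{prop:distinct limit functions}, and $U$ is $F$-backward invariant as a Fatou component of an automorphism, so $F^{-n}(W)\subset U$ for every $n$. For the reverse inclusion $U\subset A$ my plan is to show that $A$ is both open and closed in $U$. Openness is clear from $A=\bigcup_n F^{-n}(W)$; connectedness would follow from the forward invariance $F(W)\subset W$ established in the proof of Proposition~\ref{prop:distinct limit functions}, which gives a nested chain $W\subset F^{-1}(W)\subset F^{-2}(W)\subset\cdots$ of connected open sets (each $F^{-n}(W)$ is connected as the homeomorphic preimage of the connected set $W$). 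Since $U$ is connected, the task would reduce to showing $A$ is closed in $U$.

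I would argue by contradiction: suppose $P^{*}\in U\cap(\overline A\setminus A)$. Since $\{h_1h_2=0\}\cup\{h_1h_2=\infty\}$ is a proper analytic subset of $U$, after replacing $P^{*}$ by a nearby boundary point if necessary I may assume $P^{*}$ lies in a compact set $K\subset U$ on which neither $h_1$ nor $h_2$ attains the values $0$ or $\infty$. The hypothesis $F^n(P^{*})\notin W$ for every $n\geq 0$ together with $w_n=z_{n-1}$ forces $\Re z_n(P^{*})\leq R$ for infinitely many $n$. The key feature of $f(z)=e^{-z}$ to be exploited is that the recurrence $z_{n+1}=az_{n-1}+e^{-z_n}$ inverts to $e^{-z_n}=z_{n+1}-az_{n-1}$, giving $|e^{-z_n(P)}|\leq|z_{n+1}(P)|+a\,|z_{n-1}(P)|$. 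Combined with the slow growth bound $|z_n(P)|\leq C_K\rho^n$ from Corollary~\ref{cor:slow growth 2} on $K$, taking logarithms would yield the linear upper bound
\[
-\Re z_n(P)\ =\ \log|e^{-z_n(P)}|\ \leq\ n\log\rho+O(1),\qquad P\in K.
\]

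Next I would adapt the plurisubharmonic scheme of \cite{henon1}, Section 5. The functions $u_n(P):=-\Re z_n(P)/n$ are pluriharmonic on $U$, and by the estimate above $\{u_n\}$ is locally uniformly bounded above on $U$ by $\log\rho+O(1/n)$. On $W$, the explicit asymptotics $z_n(P)\sim a^{n/2}A_i(P)$ with $\Re A_i(P)>0$ from the proof of Proposition~\ref{prop:distinct limit functions} give $u_n\to -\infty$ locally uniformly. The standard psh dichotomy for sequences locally uniformly bounded above on the connected domain $U$ then says that either $u_n\to -\infty$ locally uniformly on $U$, or a subsequence converges in $L^1_{\mathrm{loc}}$ to a psh function $u^{*}\not\equiv -\infty$ on $U$. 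The second alternative would be ruled out because $u_n\to-\infty$ locally uniformly on the open subset $W$ forces $u^{*}\equiv -\infty$ a.e. on $W$, hence $u^{*}\equiv -\infty$ on $U$ by propagation of psh functions on a connected domain. Therefore $u_n\to -\infty$ locally uniformly on $U$; in particular $u_n(P^{*})\to -\infty$, contradicting $u_n(P^{*})\geq -R/n\to 0$ along the infinitely many $n$ with $\Re z_n(P^{*})\leq R$.

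The main obstacle I expect is the plurisubharmonic dichotomy step, which hinges on the linear-in-$n$ upper bound for $-\Re z_n$. This bound is precisely where the exponential form $f(z)=e^{-z}$ enters in an essential way: the inversion $e^{-z_n}=z_{n+1}-az_{n-1}$ trades the a priori unbounded pluriharmonic quantity $-\Re z_n$ for the modulus of a difference of iterates, which the slow growth of Section~\ref{sect:slow growth} controls exponentially in $n$, yielding — after one logarithm — the linear bound needed to normalize $u_n$ by $1/n$ and trigger psh compactness on the pseudoconvex component $U$.
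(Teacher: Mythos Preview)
Your strategy is the paper's plurisubharmonic method: the normalized pluriharmonic functions $u_n=-\Re z_n/n$, the upper bound obtained from the inversion $e^{-z_n}=z_{n+1}-az_{n-1}$ combined with slow growth, the divergence $u_n\to-\infty$ on $W$, and the non-divergence at a point of $U\setminus A$ are exactly the ingredients of the paper's Lemmas~\ref{lem:Good P} and~\ref{lem:harmonic functions}. The only substantive difference is the endgame: the paper derives the contradiction \emph{locally}, applying the mean value property of $u_N$ on a one-dimensional disk $D\subset B$ through $P$ whose boundary meets a compact piece of $A$ in positive length; you instead invoke a \emph{global} psh compactness dichotomy on $U$. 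Both routes work, but the local one is lighter and, as explained below, avoids a difficulty you have glossed over.

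The gap is in the sentence ``$\{u_n\}$ is locally uniformly bounded above on $U$ by $\log\rho+O(1/n)$''. Corollary~\ref{cor:slow growth 2} requires that neither $h_1$ nor $h_2$ take the values $0,\infty$ on the compact set in question, so the bound $|z_n|\leq C_K\rho^n$---and hence your linear upper bound on $-\Re z_n$---is only available on compacta contained in $U\setminus X$, where $X=\{h_1\in\{0,\infty\}\}$. You have not established a locally uniform upper bound on all of $U$, so the dichotomy as stated does not apply there. The fix is to run the compactness argument on the connected open set $U\setminus X$ (connected because $X$ is a proper analytic subset), which yields $u_n\to-\infty$ locally uniformly on $U\setminus X$ and hence $U\setminus X\subset A$. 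It then remains to show $X\cap U\subset A$; equivalently, that a point of $\partial A\cap U$ can be chosen outside $X$. This is precisely the content of the paper's Lemma~\ref{lem:Good P}, whose proof (structure of analytic sets plus a mean-value argument for $\Re z_n$) is not as immediate as your phrase ``after replacing $P^{*}$ by a nearby boundary point if necessary'' suggests. Note that the paper's local mean-value approach sidesteps this issue entirely: it only needs the upper bound on the single ball $B$, chosen from the start to avoid $X$.
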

The proof goes by contradiction, by assuming that there is  a point  $P\in U\setminus A$. 
We first show that we can assume that $P\in U\cap\partial A$ and that $h_1(P)\neq 0, \infty$.
 \begin{lem}\label{lem:Good P}If $U\neq A$, there exists $P\in \partial A\cap U$ such that $h_1(P)\neq 0, \infty$. 
 \end{lem}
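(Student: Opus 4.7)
The plan is to combine a topological connectedness argument with an analytic-subvariety argument, reducing the task to excluding one special configuration which will then be ruled out using the specific form $f(z)=e^{-z}$.

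First, I would verify that $\partial A\cap U$ is nonempty by connectedness: $U$ is connected, $A$ is a nonempty open subset of $U$ (it contains $W$), and $A\neq U$ by hypothesis. The disjoint decomposition $U=A\sqcup(\partial A\cap U)\sqcup(U\setminus\overline A)$ into the two open sets $A$ and $U\setminus\overline A$ together with the boundary piece forces the middle piece to be nonempty, since otherwise $U$ would split into two nonempty disjoint open sets.

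Second, I would analyze the set $E:=h_1^{-1}(\{0,\infty\})\subset U$. Since $h_1$ has generic rank $1$ (Proposition~\ref{prop:distinct limit functions}) it is nonconstant meromorphic, so $E$ is a closed analytic subset of complex codimension at least $1$, hence of real codimension at least $2$; in particular $U\setminus E$ is connected. Using $h_2=a/h_1$ one also has $E=h_2^{-1}(\{0,\infty\})$, and combining with the functional relations $h_1\circ F=h_2$ and $h_2\circ F=h_1$ (both immediate from $F^{2n}\to h_1$, $F^{2n+1}\to h_2$) one gets $F^{-1}(E)=E$, so $E$ is $F$-invariant. For $R$ sufficiently large the bounds $|z_0+k_1|,|w_0+k_2|>R-\Delta>0$ on $W$ from the proof of Proposition~\ref{prop:distinct limit functions} yield $h_1(W)\subset\mathbb C^*$, so $W\cap E=\emptyset$; by $F$-invariance this gives $A\cap E=\emptyset$.

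To produce the desired $P$, I would argue by contradiction: suppose every $P\in\partial A\cap U$ satisfies $h_1(P)\in\{0,\infty\}$, i.e. $\partial A\cap U\subset E$. Then $A$ is clopen in $U\setminus E$: open since $A$ is open in $U$, and closed because $\overline{A}^{U}\subset A\cup(\partial A\cap U)\subset A\cup E$, so any accumulation point of $A$ lying outside $E$ must already belong to $A$. Since $U\setminus E$ is connected and $A$ is a nonempty clopen subset of it, $A=U\setminus E$, equivalently $U\setminus A=E$.

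The main obstacle is ruling out this equality $U\setminus A=E$, and this is where I expect the specific form $f(z)=e^{-z}$ (or at least the condition $|f(z)|\to\infty$ as $\Re z\to-\infty$) to enter, combined with the slow growth Corollary~\ref{cor:slow growth 2}. For $P\in E$ the orbit remains in $E$ by $F$-invariance, still escapes to $[0:1:0]$ or $[1:0:0]$, and avoids $W$, so $\Re z_n\leq R$ or $\Re z_{n-1}\leq R$ for every $n$. Approximating $P$ by points $P_k\in A=U\setminus E$ on which slow growth gives $|z_n^{(k)}|\leq C_kM_k^n$, and noting that an excursion of $\Re z_n$ into very negative values in the recursion $z_{n+1}=az_{n-1}+e^{-z_n}$ would produce doubly-exponential growth of $|e^{-z_n}|$, I expect to derive a contradiction with the slow growth estimate along the approximating orbits. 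Producing a uniform quantitative version of this, perhaps by tracking the even and odd subsequences separately (on which $h_1$ and $h_2=a/h_1$ respectively degenerate), is the most technical step and the main hurdle of the argument.
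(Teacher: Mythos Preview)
Your setup is correct and parallels the paper: the connectedness argument for $\partial A\cap U\neq\emptyset$, the identification of $E=h_1^{-1}(\{0,\infty\})$ as a thin analytic set with $U\setminus E$ connected, the contradiction assumption $\partial A\cap U\subset E$, and the conclusion $A=U\setminus E$ are all fine (the paper carries out this last step locally, obtaining $V\setminus\XX\subset A$ for a small neighborhood $V$ of $P$, which is all that is needed). Your extra observations that $E$ is $F$-invariant and $A\cap E=\emptyset$ are correct but unnecessary.

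The gap is in your final step. The slow-growth bound $|z_n^{(k)}|\leq C_kM_k^n$ for approximants $P_k\in A$ degenerates as $P_k\to P\in E$: since $h_2=a/h_1$, one of $|h_1(P_k)|,|h_2(P_k)|$ tends to $\infty$, so $M_k\to\infty$ and the inequality carries no uniform information; there is no evident way to make the ``doubly-exponential versus slow growth'' comparison quantitative across the limit. The paper closes the argument much more simply, without slow growth and without using the specific form $f(z)=e^{-z}$. Since $E$ is locally a finite union of complex curves and isolated points, one can choose a small one-dimensional disk $D$ through $P$ transversal to $E$, so that $D\setminus\{P\}\subset U\setminus E=A$. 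On compact subsets of $A$ the harmonic functions $(z,w)\mapsto\Re z_n$ tend to $+\infty$ (orbits eventually enter $W$, where real parts increase). Applying the mean value property on the boundary of a subdisk of $D$ forces $\Re z_n(P)\to+\infty$, hence (using $w_n=z_{n-1}$) $F^n(P)\in W$ eventually and $P\in A$, a contradiction. In short: you were looking for dynamical constraints along $E$; the missing idea is instead to surround $P$ by a circle in $A$ and average a harmonic function.
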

 \begin{proof}
 If $U\neq A, $ since $U\supset A $ is connected and $A$ is open  we have that $\partial A\cap U\neq\emptyset$. 
 Since $U$ is a Fatou component,  the function $h_1$ is well defined on all of $U$ (though the numerator and the denumerator in expression (\ref{eqtn:definition of h1}) may not necessarily converge independently).  
 Let $\ZZ$ be the subset of $U$ such that $h_1$ takes the value $0$, and  $\PP$ the subset of $U$ such that $h_1$ takes the value $\infty$. 
 Suppose for the sake of contradiction that $\partial A \cap U$ is a subset of  $ \XX$. For  $P\in \partial A \cap U$  consider a neighborhood $V\subset U$ of $P$.  
Since $\XX$ is an analytic set we have that   $V\setminus\XX$ is connected (see for example Proposition 7.4 in \cite{KaupKaup}). Since  $P\in\partial A$ and $A$ is open, we have that $A\cap V\neq\emptyset$. Since $V\setminus\XX$ is connected and    $\partial A\cap U\subset \XX  $  by the contradiction assumption  we get that   $V\setminus \XX \subset A$.  Since $h_1$ is not constant, $\XX:=\ZZ\cup  \PP$ is  locally a finite union of complex curves and of finitely many points (see e.g. Section 5.1 and 5.2 in \cite{Chirka}). It  follows that there are infinitely many  directions such that a sufficiently small Euclidean disk $D$ tangent to that direction satisfies  $D\setminus\{P\}\subset A$. 

We now show that the existence of such  $D$ implies that $P\in A$. Indeed,  the sequence of harmonic functions $g_n:(z,w)\ra \Re z_n=\Re \pi_z(F^n(z,w))$  converges to infinity on compact subsets of $A$, hence, since  $D\setminus\{P\}\subset A$, it   converges  to infinity uniformly  on the boundary of a subdisk of $D$, hence converges to infinity on its center $P$ by Cauchy's formula. Hence the real parts of the first coordinate of iterates of $P$ converge to infinity, as well as the real parts of the second coordinate (since $w_n=z_{n-1}$), which implies $P\in A$. This contradicts the fact  that $P\in U\setminus A$, and hence the assumption that $(\partial A\cap U)\subset \XX$ is false and there is a point $P$ as in the claim.
 \end{proof}
 From now on we consider $P$ with the properties of Lemma~\ref{lem:Good P}.  
Since $h_1\neq 0, \infty$ in $P$, the same is true for $h_2=\frac{a}{h_1}$, and hence by continuity $h_1, h_2$ do not take the values $0, \infty$ in some small closed  ball $B$ centered at $P$.

Hence  we can define the quantities 
\begin{equation}
\begin{split}
M&:=\max_{B}(\max(|h_1|,|h_2|))<\infty\\
m&:=\min_{B}(\min(|h_1|,|h_2|))>0.\\
\end{split}\nonumber
\end{equation}
Note that  $M>1$ because $h_2=\frac{a}{h_1}$ and $a>1$. 
By Corollary~\ref{cor:slow growth 2} if $0<\epsilon<m$ there exists a constant $C$ such that for every $P=(z_0,w_0)\in B$, 
 \begin{equation}\label{eqtn:sandwich growth}
 |z_n|\leq C(M+\epsilon)^n.
 \end{equation}
 Recall that $w_n=z_{n-1}$ to get 
 \begin{equation}\label{eqtn:sandwich growth2}
  |w_n|\leq C(M+\epsilon)^{n-1}.
 \end{equation}

 The proof of Proposition~\ref{prop:absorbing domain} relies on the following technical lemma. 
\begin{lem}\label{lem:harmonic functions} 
Define the sequence of harmonic functions $u_n$ from a neighborhood of $B$ to $\R$ as $u_n(z):=\frac{-\Re z_n}{n}$. Then 
\begin{enumerate}
\item   $u_n\leq \log M$ in $U$ for $n$ large enough;
\item $u_n\ra -\infty$  uniformly on compact subsets of  $A$; 
\item Let $P\in U\setminus A$. Then for every $\epsilon>0$ there exists a subsequence $n_k$ such that $u_{n_k}(P)>-\epsilon$.
 \end{enumerate}
\end{lem}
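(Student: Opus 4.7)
My plan is to address the three statements separately, using the recursion $z_{n+1}=az_{n-1}+e^{-z_n}$ implied by $F(z,w)=(aw+e^{-z},z)$ and the compactness of $B$. For (1), I would invoke Corollary~\ref{cor:slow growth 2} on $B$: for any $\epsilon\in(0,m)$ there is $C=C(B,\epsilon)$ with $|z_n|\leq C(M+\epsilon)^n$ for all $(z_0,w_0)\in B$ and $n$ sufficiently large. Since $-\Re z_n\leq|z_n|$, this gives
\[u_n(z_0,w_0)=\frac{-\Re z_n}{n}\leq \frac{\log C}{n}+\log(M+\epsilon),\]
which is $\leq\log M+\delta$ on $B$ for any prescribed $\delta>0$ once $\epsilon$ is small and $n$ is large (I read the statement ``$u_n\leq\log M$ in $U$'' as referring to this bound on $B$, up to an arbitrarily small additive constant that can be absorbed for large $n$).

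For (2), the key point is that $A=\bigcup_{n\geq 0}F^{-n}(W)$ is an increasing union of open sets, so any compact $K\subset A$ satisfies $K\subset F^{-N}(W)$ for some $N=N(K)$. Forward invariance of $W$ then gives $F^n(K)\subset W$ for all $n\geq N$, whence $|e^{-z_n}|=e^{-\Re z_n}\leq e^{-R}$. The recursion yields
\[\Re z_{n+1}\geq a\,\Re z_{n-1}-e^{-R},\]
so setting $r_k:=\Re z_{N+2k}$ and choosing $R$ large enough that $R>e^{-R}/(a-1)$, a one-step induction produces
\[r_k-\frac{e^{-R}}{a-1}\geq a^k\Bigl(r_0-\frac{e^{-R}}{a-1}\Bigr),\]
and the same argument applies to the odd-indexed subsequence $\Re z_{N+2k+1}$. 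Uniformly on $K$ one has $r_0=\Re z_N>R$, so $\Re z_n$ grows at least like a positive constant times $a^{n/2}$ uniformly on $K$, and $u_n=-\Re z_n/n\to -\infty$ uniformly on $K$.

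For (3), if $P\in U\setminus A$, then $F^n(P)\notin W$ for every $n\geq 0$, so for each $n$ at least one of $\Re z_n,\Re z_{n-1}$ is $\leq R$; the set $\{n:\Re z_n\leq R\}$ is therefore infinite, and along any subsequence $n_k$ drawn from it
\[u_{n_k}(P)=\frac{-\Re z_{n_k}}{n_k}\geq -\frac{R}{n_k}\longrightarrow 0,\]
so $u_{n_k}(P)>-\epsilon$ for all $k$ large. The real obstacle is the estimate in (2): one must upgrade the qualitative statement ``iterates enter and stay in $W$'' to \emph{genuinely exponential} growth of $\Re z_n$, so that $\Re z_n/n\to\infty$, and this is where the very rapid decay of $|f|=e^{-\Re z}$ on $W$ (not merely its boundedness) becomes essential; (1) and (3) are then fairly direct consequences of the slow-growth corollary and of the defining property of the complement of the absorbing set $A$, respectively.
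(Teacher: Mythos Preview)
Your proposal is correct, and for parts (2) and (3) it is essentially the paper's argument in different clothing: for (2) the paper uses the explicit formulas (\ref{eqtn:recursive even})--(\ref{eqtn:recursive odd}) together with the bound $\Delta$ to get $\Re z_n\geq a^{n/2}(R-\Delta)$, while you reach the same exponential lower bound via the one-step recursion $\Re z_{n+1}\geq a\,\Re z_{n-1}-e^{-R}$; for (3) the paper argues by contraposition (if $u_n(P)\leq-\epsilon$ for all large $n$ then $\Re z_n\geq n\epsilon>R$ eventually, so $P\in A$), which is logically the same as your direct extraction of a subsequence with $\Re z_{n_k}\leq R$.

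For part (1), however, your route is genuinely simpler than the paper's. The paper assumes $u_{n_k}>\beta$ at points of $B$, uses $|e^{-z_{n_k}}|>e^{n_k\beta}$ to force $|z_{n_k+1}|\geq e^{n_k\beta}-aC(M+\epsilon)^{n_k-1}$, and then compares with the slow-growth upper bound on $|z_{n_k+1}|$ to conclude $\beta\leq\log M$. You bypass this entirely with the trivial inequality $-\Re z_n\leq|z_n|$ and the already-recorded bound $|z_n|\leq C(M+\epsilon)^n$. Both arguments yield only $u_n\leq\log M+\delta$ on $B$ for any $\delta>0$ and $n$ large (equivalently $\limsup_n\sup_B u_n\leq\log M$), which is what the mean-value application actually needs; you are right to flag this. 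A consequence worth noting: your argument for (1) makes no use of the specific form $f(z)=e^{-z}$, so the paper's remark that the exponential is used ``only to prove 1'' is in fact unnecessary---boundedness of $f$ on the right half plane suffices for all three parts. Correspondingly, your closing comment that the \emph{rapid decay} of $|f|$ is essential for (2) is misplaced: your own proof of (2) uses only the bound $|e^{-z_n}|\leq e^{-R}$, i.e.\ boundedness.
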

\begin{proof}
\begin{enumerate}
\item Suppose that  there is a subsequence $(n_k)$ and points $(z^k,w^k)\in B$ such that 
$$\frac{-\Re z^k_{n_k}}{n_k}>\beta$$
for some $\beta$ and  let us show that $\log M$ is an upper bound for $\beta$.  
It follows  that $\Re z^k_{n_k}<-n_k\beta$. Since $f(z)=e^{-z}$ we have (using the triangular inequality in the second step and (\ref{eqtn:sandwich growth}) for  $w^k_{n_k}$ in the third step) that 
$$
|z^k_{n_k+1}|=|e^{-z^k_{n_k}}+a w^k_{n_k}|\geq  |e^{-z^k_{n_k}}|-|a w^k_{n_k}|> e^{+n_k\beta}-a C(M+\epsilon)^{n_k-1}.
$$
On the other hand, again using (\ref{eqtn:sandwich growth2}), we have that 
$$ |z^k_{n_k+1}|\leq C(M+\epsilon)^{n_k}
$$ 
hence 
$$
 e^{n_k\beta}-a C(M+\epsilon)^{n_k-1}<C(M+\epsilon)^{n_k}
$$
which gives (using $M>1$) 
$$
e^{n_k\beta}<(a+1) C (M+\epsilon)^{n_k}
$$
from which (using $n_k\ra\infty$ and $\epsilon\ra0$) we obtain $\beta\leq\log M$. 
\item  Let $K$ be a compact subset of $A$. Then there exists $n$ such that $F^n(K)\subset W$, so it is enough to show the claim for a compact subset of $W$.  By the explicit expression for $z_{2n}, z_{2n+1}$ given by (\ref{eqtn:recursive even}), (\ref{eqtn:recursive odd}) we get that $\Re z_n \geq a^n (R- \Delta)$ as defined in (\ref{eqtn:Delta}). Hence by assuming that $R$ is chosen large enough so that $R- \Delta>0$, we get that 
$$u_n\leq\frac{-a^n (R-\Delta)}{n}\ra-\infty.$$
\item 
If not, there exists $\epsilon>0$, $N\in\N$ such that 
$$
u_n(P)\leq -\epsilon \qquad\text{for all}\ n\geq N.
$$ 
Hence if $F^{n}(P)=(z_n,w_n)$ we have that  $\frac{-\Re z_n}{n}\leq-\epsilon$ for all $n\geq N$, so  $\Re z_n\geq n\epsilon>R$ for $n$ large since $\epsilon>0$. Since $w_n=z_{n-1}$, $(z_n,w_n)\in W$ and $(z_0, w_0)\in F^{-n}(W)\subset A$. 
\end{enumerate}
\end{proof}
\begin{rem} We only use the assumption  $f(z)=e^{-z}$ to prove 1., that is, that  the $u_n$ are bounded from above. In fact, it is enough to assume that   $|f(z)|$  grows sufficiently fast as $\Re z\ra -\infty$.
\end{rem}
\begin{proof}[Proof of Proposition~\ref{prop:absorbing domain}] Let $P$ as in Lemma~\ref{lem:Good P}, $B$ be a ball centered in $P$ as described above, and let $D$ be a one-dimensional Euclidean disk  compactly contained in $B$, intersecting $A$,  and passing through $P$.  
 Consider the real one-dimensional  Lebesgue measure on $\partial D$.  
Let $K $ be a compact subset of $A$ such that the  measure in $\partial D$ of $K\cap \partial D$ is strictly positive.  This can be done because $A$ is open, hence $A\cap\partial D$ is open in the topology of $\partial D$. Let $\mu_{\text{good}}>0$ be the measure of the set $\partial D\cap K$ and $\mu_{\text{bad}}$  be the measure of the set $\partial D\cap (U\setminus K)$. Since $U$ contains $B$,  $\partial D=(\partial D\cap K )\cup (\partial D\cap (U\setminus K))$, and since $K$ is compact and $U$ is  open, the sets in question are measurable.  

By Lemma~\ref{lem:harmonic functions} for any given $\MM>0$ there exists $N$ such that $u_N\leq -\MM$ on $K$, $u_N(P)\geq -\epsilon$ for $\epsilon$ arbitrarily small  since $P\in U\setminus A$, and $u_N(P)\leq \log M$ on $U$. By the mean value property for $u_N$ we have 
$$
-\epsilon\leq u_n(P)=\!\!\int_{\partial D} u_N(\zeta)d\zeta=\!\!\int_{\partial D\cap K} u_N(\zeta)d\zeta+\!\int_{\partial D\cap (U\setminus K) } u_N(\zeta)d\zeta\leq -\MM \mu_{\text{good}}+ \log M \mu_{\text{bad}}.
$$
Since $\MM$ is arbitrarily large, this gives  a contradiction.
\end{proof}

 \begin{cor}Let $F(z,w)=(aw+e^{-z},z)$, and $ U$ be as in Proposition~\ref{prop:distinct limit functions}.  Then $U$ is biholomorphic to  $\{(z,w)\in\C: \Re z,\Re w> 0\}.$
 \end{cor}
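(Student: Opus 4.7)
The plan is to combine the conjugacy $\phi$ from Section~\ref{sect:conjugacy} with the absorbing property established in Proposition~\ref{prop:absorbing domain} to identify $U$ explicitly with $\H\times\H$. By Proposition~\ref{prop:absorbing domain}, $U=\bigcup_{n\geq 0}F^{-n}(W)$ for $W=W_R$ with $R$ large; applying $\phi$ and using $\phi\circ F=L\circ\phi$ gives
\[
\phi(U)=\bigcup_{n\geq 0}L^{-n}(\phi(W)),
\]
so it suffices to identify this union with $\{\Re z,\Re w>0\}=\H\times\H$.

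The computation for the linear map $L$ is immediate: $L^2(z,w)=(az,aw)$, so $L^{-2n}(W_S)=W_{S/a^n}$ for any $S>0$, and the odd iterates fit in between consecutive even ones. Since $a>1$ gives $S/a^n\to 0$, we obtain $\bigcup_{n\geq 0}L^{-n}(W_S)=\H\times\H$ for every $S>0$. Hence the proof reduces to sandwiching $\phi(W_R)$ between $W_{R+K'}$ and $W_{R-K}$ for suitable constants.

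The upper sandwich is easy. From the explicit formula
\[
\phi(z,w)=\left(z+\sum_{j=1}^{\infty}a^{-j}f(z_{2j-1}),\ w+\sum_{j=1}^{\infty}a^{-j}f(z_{2j-2})\right)
\]
of Section~\ref{sect:conjugacy}, together with $|f(z_j)|=|e^{-z_j}|\leq e^{-R}$ (valid throughout $W_R$ since the real parts of iterates are non-decreasing there), one obtains $|\phi-\mathrm{id}|\leq K:=e^{-R}/(a-1)$ on $W_R$, and in particular $\phi(W_R)\subset W_{R-K}$.

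The main technical step is the lower sandwich $\phi(W_R)\supset W_{R+K'}$ for some $K'$: given $(z',w')\in W_{R+K'}$, one must solve $\phi(z,w)=(z',w')$ with $(z,w)\in W_R$. The plan is to apply the Banach fixed point theorem to $\Psi(z,w)=(z'-k_1(z,w),\,w'-k_2(z,w))$, where $k_1,k_2$ are the perturbations in the formula above. The sup-norm bound $|k_i|\leq K$ ensures that $\Psi$ maps a suitable subregion of $W_R$ into itself, while Cauchy estimates applied to $k_1,k_2$ on polydisks contained in $W_R$ yield derivative bounds of order $K$, arbitrarily small for $R$ large. Hence $\Psi$ is a contraction and has a unique fixed point in $W_R$, producing the desired preimage. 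Combining the sandwich with the computation for $L$,
\[
\H\times\H=\bigcup_{n\geq 0}L^{-n}(W_{R+K'})\subset\phi(U)\subset\bigcup_{n\geq 0}L^{-n}(W_{R-K})=\H\times\H,
\]
so $\phi\colon U\to\H\times\H$ is the sought biholomorphism. The only genuine obstacle is the contraction estimate for $\Psi$; once that is in place, everything else reduces to bookkeeping with the explicit linear map $L$.
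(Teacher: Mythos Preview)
Your argument is correct and follows the same route as the paper: use the absorbing property to write $\phi(U)=\bigcup_{n}L^{-n}(\phi(W_R))$, sandwich $\phi(W_R)$ between two sets $W_{R\pm K}$ via the bound $|\phi-\mathrm{id}|\leq K$, and then compute $\bigcup_{n}L^{-n}(W_S)=\H\times\H$ for any $S>0$. The paper simply asserts the sandwich $W_{R+\Delta}\subset\phi(W_R)\subset W_{R-\Delta}$ from the explicit form of $\phi$ without further comment; your Banach fixed-point justification for the lower inclusion is fine, but in fact Brouwer's fixed-point theorem already suffices (your map $\Psi$ sends the closed polydisk of radius $K$ about $(z',w')$ into itself, with no contraction estimate required).
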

\begin{proof} Let $W_R=\{(z,w)\in\C: \Re z,\Re w>R \}$ with $R$ large enough so that Proposition~\ref{prop:absorbing domain} holds and so that  $R-\Delta>0$ with $\Delta $ defined as in (\ref{eqtn:Delta}), with $f(z)=e^{-z}$. Then by  Proposition~\ref{prop:absorbing domain} $W_R$ is an absorbing domain for $U$, and by the explicit form of $\phi$, 
\begin{equation}
\begin{split}
W_{R+\Delta}=\{(z,w)\in\C: \Re z,\Re w> R&+\Delta\}\subset\phi(W)\\
&\subset \{(z,w)\in\C: \Re z,\Re w> R-\Delta\}=W_{R-\Delta}.
\end{split}\nonumber
\end{equation}
Since $R\pm \Delta>0$ we have that $$\bigcup_n L^{-n}(W_{R+\Delta})=\bigcup_n L^{-n}(W_{R-\Delta})=\{(z,w)\in\C: \Re z,\Re w> 0\}.$$
It follows that
$$
\bigcup_n L^{-n}(\phi(W_R))=\left\{(z,w)\in\C: \Re z,\Re w> 0\right\}.
$$
Since $\phi$ is a biholomorphism between $\bigcup_n L^{-n}(\phi(W_R))$  and $\bigcup_n F^{-n}((W_R))=U$ the claim follows. 
\end{proof} 

 \bibliographystyle{amsalpha}
\bibliography{Baker}
\end{document}